\documentclass[english,15pt]{article}
\usepackage[T1]{fontenc}
\usepackage{geometry}
\usepackage{float}
\usepackage{mathrsfs}
\usepackage{amsmath}
\usepackage{amsthm}
\usepackage{amssymb}
\usepackage{graphicx}
\usepackage{hyperref}
\hypersetup{
    colorlinks=true,
    linkcolor=blue,
    filecolor=magenta,      
    urlcolor=cyan,
    citecolor = blue,
}

\newcounter{hypA}

\newcounter{hypH}

\usepackage{bbm} 
\usepackage{graphicx}
\usepackage{subcaption}
\usepackage{makeidx}
\usepackage{amssymb}
\usepackage[utf8]{inputenc}
\usepackage{booktabs}
\usepackage{enumitem}
\usepackage{algorithm}
\usepackage{algpseudocode}
\graphicspath{{../figures/}}

\providecommand{\U}[1]{\protect\rule{.1in}{.1in}}

\newtheorem{prop}{Proposition}[section]
\newtheorem{cor}[prop]{Corollary}

\newtheorem{lem}[prop]{Lemma}

\newtheorem{theo}[prop]{Theorem}

\newcommand{\RR}{\mathbb{R}}

\newcommand{\Na}{ {\cal N }}

\newcommand{\E}{\mathbb{E}}
\newcommand{\Var}{\textnormal{\textrm{Var}}}
\newcommand{\Cov}{\textnormal{\textrm{Cov}}}
\newcommand{\Lip}{\textnormal{\textrm{Lip}}}

\def \RR{\mathbb{R}}

\usepackage[textwidth=2.5cm, textsize=scriptsize]{todonotes}

\newcommand{\vertiii}[1]{{\left\vert\kern-0.25ex\left\vert\kern-0.25ex\left\vert #1
    \right\vert\kern-0.25ex\right\vert\kern-0.25ex\right\vert}}

\begin{document}


\begin{center}
{\Large \textbf{Sequential Entropic Optimal Transport for Bayesian Inverse Problems via Multilevel Monte Carlo}}

\vspace{0.5cm}
JASON BEH$^{1}$,  PIERRE DEL MORAL$^{2}$ \& 
AJAY JASRA$^{1}$ 

{\footnotesize $^{1}$Computing \& Mathematical Sciences Division,  Mohamed Bin Zayed University of Artificial Intelligence,  Abu Dhabi, UAE.}\\
{\footnotesize $^{2}$Centre de Recherche Inria Bordeaux Sud-Ouest, Talence, FR.}\\
{\footnotesize E-Mail:\,} 
\texttt{\emph{\footnotesize {Jason.KaiXun,Ajay.Jasra}@mbzuai.ac.ae, pierre.del\_moral@inria.fr}}

\begin{abstract}
We consider the problem of Bayesian estimation
associated to inverse problems governed by partial differential equations (PDEs). In most cases of practical
interest, one must use numerical solvers to implement computational methods based on Monte Carlo methods.
This inevitably leads to an adaptation of multilevel Monte Carlo (MLMC) methods to reduce computational efforts to achieve a pre-specified mean square error (MSE). 
MLMC is based on sampling coupled versions of Bayesian posteriors associated to increasingly precise numerical solves of the PDE.
In this article
we present a novel method to implement MLMC
based upon sequential entropic optimal transport couplings 
learned with light Schr\"odinger
bridges, and correct the transport with Metropolis--Hastings steps that preserve the pairing. We bound the mean square error associated to a version of our new estimation method
and show, for an elliptic inverse problem, that 
to achieve a MSE of $\mathcal{O}(\delta^2)$ ($\delta >0$)
one can have an optimal cost of $\mathcal{O}(\delta^{-2})$.
The results feature explicit bounds on the effect of the fitted transport and of the Metropolis--Hastings correction. 
From a numerical perspective, 
on elliptic, Darcy-flow and Navier--Stokes problems, the method reduces the variance of the
most precise solver by a factor of five to ten and is faster than multilevel sequential Monte Carlo 
\cite{mlsmc}
at tight accuracies.
\\
\bigskip
\noindent\textbf{Keywords:} Entropic Optimal Transport, 
Light Schr\"odinger Bridges, 
Bayesian Inverse Problems,  Multilevel Monte Carlo.
\end{abstract}

\end{center}




\section{Introduction}\label{sec:background}

Inverse problems and their Bayesian counterparts
are found in a large class of practical applications,
which include physics, engineering, finance, and applied mathematics; see e.g.~\cite{stuart} and the references therein. In many cases, these types of models are related to partial differential equations (PDEs), which require
numerical solution, and, in addition, the associated Bayesian estimation problem relates to a posterior density that can only be computed point-wise and with numerical
error associated to the PDE solver. This naturally means that one must resort to computational methods based on, for example, Markov chain simulation, for which there have been numerous contributions; see for example \cite{smc_elliptic,dkst,kbj,mlsmc}.

The intention of this article is to introduce a new methodology,
using machine learning methods, which can improve in terms of accuracy
and speed, when related to these afore-mentioned Markov chain methods.
Our approach will be based on entropic optimal transport and to that end, we give some mathematical background to provide context going forward.

\subsection{Discretized Bayesian inverse problems}\label{sec:problem}

Let $\pi$ be a probability measure on a measurable space $(\mathsf{X},\mathscr{X})$ with $\mathsf{X}\subseteq\mathbb{R}^d$.
For a $\pi$-integrable $\varphi:\mathsf{X}\rightarrow\mathbb{R}$ we wish to compute
$$
\pi(\varphi) := \int_{\mathsf{X}}\varphi(x)\pi(dx).
$$
We assume that $\pi$ has a density with respect to a $\sigma$-finite measure $dx$ on $(\mathsf{X},\mathscr{X})$,
$$
\pi(dx) = \frac{\gamma(x)dx}{Z},
$$
where $\gamma:\mathsf{X}\rightarrow\mathbb{R}^+$ and $Z=\int_{\mathsf{X}}\gamma(x)dx$.

We are interested in the case where $\gamma$ cannot be evaluated, but approximations $\gamma_l:\mathsf{X}\rightarrow\mathbb{R}^+$,
$l\in\mathbb{N}_0$, can. We assume that, for each $l\in\mathbb{N}_0$, $Z_l=\int_{\mathsf{X}}\gamma_l(x)dx$ is finite and positive, so that
$$
\pi_l(dx) = \frac{\gamma_l(x)dx}{Z_l}
$$
is a probability measure, and that for every $\pi$-integrable $\varphi:\mathsf{X}\rightarrow\mathbb{R}$,
$$
\lim_{l\rightarrow\infty}\pi_l(\varphi) = \pi(\varphi),
$$
where $\pi_l(\varphi) := \int_{\mathsf{X}}\varphi(x)\pi_l(dx)$. This is the situation in Bayesian inverse problems, the class of problem of interest, where
$\gamma_l$ involves the numerical solution of a PDE at resolution $l$.

\subsection{Background on entropic optimal transport}\label{sec:eot}


Our main objective is to sample couplings of the targets
$\pi_l$ and $\pi_{l-1}$ as we will now describe.
Let $\mu$ and $\nu$ be probability measures on $(\mathbb{R}^d,\mathscr{B}(\mathbb{R}^d))$ with Lebesgue densities, also
denoted $\mu$ and $\nu$, and let $\Pi(\mu,\nu)$ be the set of their couplings; that is the collection of probability measures that have
marginals $\mu$ and $\nu$ respectively.
For a probability density $\xi$ on
$\mathbb{R}^p$, $H(\xi)= -\int_{\mathbb{R}^p}\xi(x)\log\{\xi(x)\}dx$ is its differential entropy. For $\varepsilon>0$, the
entropic optimal transport (EOT) problem is
\begin{align} \label{eq:eot-prob}
\inf_{\xi\in\Pi(\mu,\nu)}\left\{ \frac{1}{2}\int_{\mathbb{R}^{2d}}\|x-y\|_2^2\xi(x,y)\,dx\,dy - \varepsilon H(\xi)\right\},
\end{align}
where $\|\cdot\|_2$ is the Euclidean norm.
As noted by several authors (e.g.,~\cite{koro2}), the optimal coupling can be written as
$$
\xi^{\star}(x,y) = \mu(x) \frac{\exp\left\{\tfrac{\langle x,y\rangle }{\varepsilon}\right\}v^{\star}(y)}
{\int_{\mathbb{R}^d}\exp\left\{\tfrac{\langle x,y\rangle }{\varepsilon}\right\}v^{\star}(y)dy},
$$
where the potential $v^{\star}:\mathbb{R}^d\rightarrow\mathbb{R}^+$ is unknown.
For a potential $v:\mathbb{R}^d\rightarrow\mathbb{R}^+$ write $Z_{v}(x):=\int_{\mathbb{R}^d}
\exp\left\{\tfrac{\langle x,y\rangle }{\varepsilon}\right\}v(y)dy$. To approximate $v^{\star}$, let $v_{\theta}$ be a
potential with finite-dimensional parameter $\theta\in\Theta\subseteq\mathbb{R}^{d_{\theta}}$. In \cite{koro1}, $\theta$ is
fitted by minimizing
$$
\mathcal{L}(\theta) = \int_{\mathbb{R}^d}\log\{Z_{v_\theta}(x)\}\mu(x)dx - 
\int_{\mathbb{R}^d}\log\{v_\theta(y)\}\nu(y)dy ,
$$
with the Gaussian-mixture potential
$$
v_{\theta}(y)  = \sum_{j=1}^J \vartheta_j\phi_d(y;m_j,\Sigma_j),
$$
where $\phi_d(y;m,\Sigma)$ is the $d$-dimensional Gaussian density with mean $m$ and covariance matrix $\Sigma$,
$\vartheta_j\geq 0$, $\sum_{j=1}^J\vartheta_j = 1$ and $\theta=(m_{1:J},\Sigma_{1:J},\vartheta_{1:J-1})$. With this choice
$Z_{v_\theta}(x)$ is available in closed form and the conditional law of $y$ given $x$ is a Gaussian mixture that can be
sampled exactly, which gives a fast, though possibly biased, approximation of $\xi^{\star}$.

\subsection{Related work, contributions and paper structure}

The main idea of this article is to use EOT to sample couplings
of $\pi_l$ and $\pi_{l-1}$ within the context of 
Multilevel Monte Carlo \cite{cgst,giles,giles_acta,heinrich} (MLMC).
MLMC writes $\pi_L(\varphi)$ as a telescoping sum of corrections between
consecutive levels and estimates each one using coupled samples. The utility of doing so is that for sufficiently good couplings one gains
a variance reduction and choosing the number of simulated samples appropriately, one can reduce the computational cost to achieve a mean square error (MSE), versus just considering sampling from $\pi_L$ say.
The `good couplings' we mention above will be obtained by using
EOT.

In the context of MLMC, and for Bayesian inverse problems, couplings have been built with Markov chains \cite{dkst,hss} or with sequential Monte
Carlo samplers \cite{dmdj}, as in the multilevel sequential Monte Carlo (MLSMC) method of \cite{mlsmc}, later
extended with likelihood-informed proposals \cite{mlsmc_dili} and with tempering \cite{ls2mc}. Sequential Monte Carlo
methods of this type have been applied, for instance, to inference for the Navier--Stokes equations \cite{kbj},
Lagrangian data assimilation with unknown observation locations \cite{rbcjk} and, more recently, zeroth-order
stochastic bilevel optimization \cite{csss}.
Instead of reweighting samples of one distribution to represent another, 
optimal transport moves samples, through a deterministic map or a Markov kernel that pushes the first distribution onto the second; see \cite{mmps} for an introduction. Transport maps have
been used to sample posteriors directly \cite{em}, also as compositions of maps between consecutive tempered
densities \cite{cd,zzcs}, to accelerate Markov chain Monte Carlo \cite{pm_tmcmc,pm_multi}, and to combine annealing with
multi-fidelity models \cite{sbm}. In multilevel estimators they have been used to couple the levels \cite{hjs,mmg}, for
instance by pushing a common reference sample through a map fitted to each level. In sequential Monte Carlo, normalizing flows
\cite{aft,craft}, controlled transitions \cite{csmc} and Schr\"odinger bridges \cite{sbs} have been learned
between consecutive tempered distributions, with importance weights to correct the transport.

Rather than a deterministic map, entropic optimal transport \cite{leonard,nutz} gives a coupling of two distributions: the joint law with the prescribed marginals that minimizes the expected squared distance between the coupled points plus an entropy penalty. Its conditional laws act as a stochastic transport map, and the coupled pairs are close when the two distributions are close. The coupling is computed by Sinkhorn iterations
between empirical measures \cite{cuturi,pc}, whose stability is studied in \cite{dmj}.
The light Schr\"odinger bridge \cite{koro2,koro1} parametrizes the potential by a Gaussian mixture, which makes the
coupling cheap to fit from samples and exact to simulate.

In this article we present two distinct methods for implementing
MLMC via EOT. In the first, we use a light Schr\"odinger bridge
to learn the EOT between each $\pi_l$ and $\pi_{l-1}$ and combine this with a type of particle simulation
method. In the second, this approach is augmented with Metropolis-Hastings steps.  
We then provide a mathematical analysis of our approach in terms of an MSE analysis. We bound the MSE associated to a version of our new estimation method
and show, for an elliptic inverse problem, that 
to achieve a MSE of $\mathcal{O}(\delta^2)$ ($\delta >0$)
one can have an optimal cost of $\mathcal{O}(\delta^{-2})$, depending on the cost of the solver.
The results feature explicit bounds on the effect of the fitted transport and of the Metropolis--Hastings correction. 
We then consider three numerical examples associate to
elliptic, Darcy-flow and Navier--Stokes problems. We show that the second method reduces the variance of the
most precise solver by a factor of five to ten and is faster than multilevel sequential Monte Carlo 
\cite{mlsmc} at tight accuracies.


This article is structured as follows. In section \ref{sec:approach} our methodology is detailed.
In Section \ref{sec:analysis} we present our mathematical results, with some proofs deferred to the appendix. Section \ref{sec:numerics}
details our numerical results.

\section{Multilevel Estimation with Learned Entropic Couplings}\label{sec:approach} 

\subsection{Notation}\label{sec:notation}
Throughout, $\mathsf X\subseteq\RR^d$ is the state space and $x$ a point of $\mathsf X$; $\|\cdot\|_2$ and
$\langle\cdot,\cdot\rangle$ are the Euclidean norm and inner product. Probability measures with a Lebesgue density are
identified with that density. For a probability measure $\mu$ and a function $\varphi$, $\mu(\varphi)=\int\varphi\,d\mu$ and
$\Var_\mu(\varphi)$ is the variance of $\varphi$ under $\mu$; $\E_\xi$ is the expectation under a law $\xi$. A function $\varphi$ is Lipschitz if
$\|\varphi\|_{\Lip}=\sup_{x\neq x'}|\varphi(x)-\varphi(x')|/\|x-x'\|_2<\infty$. For a Markov kernel $K$,
$\mu K(dy)=\int\mu(dx)K(x,dy)$ and $(\mu\otimes K)(dx,dy)=\mu(dx)K(x,dy)$.

For probability measures $\mu$ and $\nu$, $\Pi(\mu,\nu)$ is the set of couplings, i.e., of laws on $\mathsf X\times\mathsf X$
with marginals $\mu$ and $\nu$, and $W_2^2(\mu,\nu)=\inf_{\xi\in\Pi(\mu,\nu)}\E_\xi\|X-Y\|_2^2$. We write
$\|\mu-\nu\|_{\mathrm{TV}}=\sup_A|\mu(A)-\nu(A)|$, $\mathrm{KL}(\mu\|\nu)=\int\log(d\mu/d\nu)\,d\mu$, and
$H(\xi)=-\int\xi\log\xi$ for the differential entropy of a density $\xi$. $\Na(m,\Sigma)$ is the Gaussian law with mean $m$
and covariance $\Sigma$ and $\phi_d(\cdot\,;m,\Sigma)$ its density on $\RR^d$. For nonnegative sequences,
$a_l\lesssim b_l$ means $a_l\le Cb_l$ for a constant $C$ that does not depend on $l$.

\subsection{Multilevel transport with light Schr\"odinger bridges}\label{sec:transport}

We approximate $\pi_L(\varphi)$ through the telescoping sum
$$
\pi_L(\varphi) = \pi_0(\varphi) + \sum_{l=1}^L\left\{\pi_l(\varphi) - \pi_{l-1}(\varphi)\right\}.
$$
If one can sample from couplings $\xi_l\in\Pi(\pi_{l-1},\pi_{l})$ whose transport cost
$$
\int_{\mathbb{R}^{2d}}\|x-y\|_2^2\xi_l(x,y)\,dx\,dy
$$
is small, each correction can be estimated from few samples, and a prescribed mean square error is reached at lower
cost than by sampling from $\pi_L$ alone; see \cite{mlsmc} and the references therein. The method of \cite{mlsmc}
achieves this with a combination of sequential and Markov chain Monte Carlo. We instead learn, at each level, an
entropic coupling of consecutive targets with the light Schr\"odinger bridge of \cite{koro1}.

Throughout the paper, the numbers of particles satisfy $1\le N_L\le\cdots\le N_0<\infty$, so that every particle
at level $l$ has a parent at level $l-1$. Given the Gaussian-mixture potentials $v_\theta$ of Section~\ref{sec:eot}, the
parameter of the transition from level $l$ to level $l+1$ is fitted by stochastic gradient descent on the
self-normalized empirical loss
\begin{equation}\label{eq:loss}
\mathcal{L}^{N_l}(\theta) = \frac{1}{N_l}\sum_{i=1}^{N_l} \log\{Z_{v_\theta}(X_l^i)\}
- \frac{\frac{1}{N_l}\sum_{i=1}^{N_l}\frac{\gamma_{l+1}(X_l^i)}{\gamma_l(X_l^i)}
\log\{v_\theta(X_l^i)\}
}{\frac{1}{N_l}\sum_{i=1}^{N_l}\frac{\gamma_{l+1}(X_l^i)}{\gamma_l(X_l^i)}},
\end{equation}
and the particles are moved with the kernel
\begin{equation}\label{eq:ktheta}
  K_\theta(x,dy)=\frac{\exp\{\langle x,y\rangle/\varepsilon\}\,v_\theta(y)}{Z_{v_\theta}(x)}\,dy ,
\end{equation}
with $\varepsilon=\varepsilon_{l+1}$ the entropic parameter of the transition from level $l$ to level $l+1$. The resulting
method is Algorithm~\ref{alg:lsb}.
\begin{algorithm}[htbp]
\caption{Multilevel estimation with light Schr\"odinger bridge transport}\label{alg:lsb}
\begin{algorithmic}[1]
\Require horizon $L$, sample sizes $N_0\ge\dots\ge N_L$, entropic parameters $\varepsilon_1,\dots,\varepsilon_L$
\State sample $X_0^1,\dots,X_0^{N_0}$ i.i.d.\ from $\pi_0$
\For{$l=0,\dots,L-1$}
  \State compute $\theta_{l+1}$ by stochastic gradient descent on $\mathcal L^{N_l}(\theta)$ in~\eqref{eq:loss}
  \For{$i=1,\dots,N_{l+1}$}
    \State sample $X_{l+1}^i\sim K_{\theta_{l+1}}(X_l^i,\cdot)$, with $K_\theta$ in~\eqref{eq:ktheta} and $\varepsilon=\varepsilon_{l+1}$
  \EndFor
\EndFor
\State \Return $\widehat\pi_L(\varphi)$ in~\eqref{eq:estimator}
\end{algorithmic}
\end{algorithm}
The particles at level $l$ have law $\pi^\theta_l$, where $\pi^\theta_0=\pi_0$ and
$\pi^\theta_{l+1}=\pi^\theta_lK_{\theta_{l+1}}$.

The estimator of $\pi_L(\varphi)$ is
\begin{equation}\label{eq:estimator}
\widehat{\pi}_L(\varphi) = \frac{1}{N_0}\sum_{i=1}^{N_0}\varphi(X_0^i) + \sum_{l=1}^L
\frac{1}{N_l}\sum_{i=1}^{N_l}\{\varphi(X_l^i) -\varphi(X_{l-1}^i)\},
\end{equation}
in which each correction is a paired difference along the transport. Line 1 of Algorithm~\ref{alg:lsb} assumes
i.i.d.\ samples from $\pi_0$, as
in~\cite{mlsmc}. This is realistic when the level-0 solver is coarse enough for rejection sampling from the prior to be
efficient; otherwise $\pi_0$ can be approximated by a tempered sequential Monte Carlo sampler, as in
Section~\ref{sec:numerics}. The same mechanism gives approximate samples from any sequence of related probability
measures on a common state space.

\subsection{Transport followed by Metropolis--Hastings}\label{sec:tmh}

The particles produced in line 5 of Algorithm~\ref{alg:lsb} have law $\pi_{l+1}^\theta =  \pi^\theta_lK_{\theta_{l+1}}$ rather
than $\pi_{l+1}$, and this error accumulates over levels. A simple correction is to follow each transport step by a few
iterations of a Markov kernel $M_{l+1}$ that leaves $\pi_{l+1}$ invariant, started at the transported particle. We call
the resulting method TransportMH; it is given in Algorithm~\ref{alg:tmh}.
\begin{algorithm}[htbp]
\caption{TransportMH: transport followed by Metropolis--Hastings}\label{alg:tmh}
\begin{algorithmic}[1]
\Require horizon $L$, sample sizes $N_0\ge\dots\ge N_L$, entropic parameters $\varepsilon_1,\dots,\varepsilon_L$, numbers of
sweeps $n_1,\dots,n_L\ge0$, Markov kernels $M_1,\dots,M_L$ with $\pi_lM_l=\pi_l$
\State sample $X_0^1,\dots,X_0^{N_0}$ i.i.d.\ from $\pi_0$
\For{$l=0,\dots,L-1$}
  \State compute $\theta_{l+1}$ by stochastic gradient descent on $\mathcal L^{N_l}(\theta)$ in~\eqref{eq:loss}
  \For{$i=1,\dots,N_{l+1}$}
    \State sample $\widetilde X_{l+1}^i\sim K_{\theta_{l+1}}(X_l^i,\cdot)$
    \State apply $n_{l+1}$ steps of $M_{l+1}$ started at $\widetilde X_{l+1}^i$, and call the result $X_{l+1}^i$
  \EndFor
\EndFor
\State \Return $\widehat\pi_L(\varphi)$ in~\eqref{eq:estimator}
\end{algorithmic}
\end{algorithm}
The estimator is $\widehat{\pi}_L(\varphi)$ in~\eqref{eq:estimator}, computed with the particles after the
Metropolis--Hastings (MH) step.

In our implementation $M_{l+1}$ is a preconditioned Crank--Nicolson Metropolis--Hastings (pCN-MH) kernel. Let
$(m,AA^\top)$ be the mean and covariance of $\{\widetilde X_{l+1}^i\}_{i\le N_{l+1}}$. The proposal is
\[
  x'=m+A\big\{\rho\,A^{-1}(x-m)+(1-\rho^2)^{1/2}U\big\},\qquad U\sim\Na(0,I_d),
\]
which is reversible with respect to $\Na(m,AA^\top)$, and it is accepted with probability
\[
  \min\Big\{1,\ \frac{\gamma_{l+1}(x')\,\phi_d(x;m,AA^\top)}{\gamma_{l+1}(x)\,\phi_d(x';m,AA^\top)}\Big\},
\]
so $M_{l+1}$ leaves $\pi_{l+1}$ invariant for any $(m,A)$ and $\rho\in[0,1)$. The step and the number of sweeps adapt to the difficulty of the transition, measured by the effective sample size
(ESS) fraction $e_{l+1}$ of the weights $\gamma_{l+1}/\gamma_l$ on the level-$l$ cloud. With
$t_{l+1}=\min\{1,\max(0,\log(e_{l+1}/0.05)/\log10)\}$, which is 0 if $e_{l+1}\le5\%$ and 1 if $e_{l+1}\ge50\%$,
\[
  n_{l+1}=\mathrm{round}(30-27t_{l+1}),\qquad
  \log(1-\rho_{l+1}^2)=(1-t_{l+1})\log(1-0.8^2)+t_{l+1}\log(1-S_{l+1}^2),
\]
where $S_{l+1}=\{(\varepsilon_{l+1}^2+4)^{1/2}-\varepsilon_{l+1}\}/2$ is the correlation of the EOT coupling of $\Na(0,I_d)$ with
itself at parameter $\varepsilon_{l+1}$~\cite{adm,jmpc,mgm}. After a hard transition the chain makes 30 sweeps of large
moves ($\rho=0.8$); once the weights are close to uniform it makes 3 sweeps of moves at the scale of the transport noise,
since $1-S_{l+1}^2=\varepsilon_{l+1}S_{l+1}$.

Four remarks are in order.
\begin{itemize}
\item With $n_{l+1}=0$ for all $l$, Algorithm~\ref{alg:tmh} reduces to Algorithm~\ref{alg:lsb}.
\item Since $M_{l+1}$ is $\pi_{l+1}$-invariant, $\|\mu M_{l+1}-\pi_{l+1}\|_{\mathrm{TV}}\le\|\mu-\pi_{l+1}\|_{\mathrm{TV}}$ for
  any $\mu$, so the MH step can only reduce the distance of the particle law to $\pi_{l+1}$. Finitely many sweeps do not
  remove this distance, so $\widehat{\pi}_L(\varphi)$ remains biased for $\pi_L(\varphi)$.
\item Each chain starts at the transported particle, so $X_{l+1}^i$ stays paired with its parent $X_l^i$. When
  $\rho$ is close to one the pairing is barely affected; larger moves trade a weaker pairing, and hence a larger
  variance of $\varphi(X_{l+1}^i)-\varphi(X_l^i)$, for a smaller bias.
\item The transition from level $l$ to level $l+1$ costs $N_l$ evaluations of $\gamma_{l+1}$ for the training weights,
  $N_{l+1}(1+n_{l+1})$ for the transported particles and the MH sweeps, and the stochastic gradient fit, which needs no evaluation of
  $\gamma_{l+1}$.
\end{itemize}



\section{Mathematical Analysis}\label{sec:analysis}
We analyze the paired estimator~\eqref{eq:estimator}. Section~\ref{sec:mse} bounds its mean square error when each
transition is an exact coupling of consecutive targets, in terms of the transport costs between levels, and derives
the multilevel rates and complexity. Section~\ref{sec:elliptic-theory} applies these bounds to posteriors with bounded
misfit, such as the elliptic problem of Section~\ref{sec:num-1d}, and extends them to the fitted kernels, with and
without the Metropolis--Hastings correction. The proofs of the results stated without proof are collected in
Appendix~\ref{app:proofs}.

The results concern Algorithms~\ref{alg:lsb} and~\ref{alg:tmh} and idealized versions of them, as follows.
\begin{itemize}
\item Theorem~\ref{thm:mse} and Corollaries~\ref{cor:rates} and~\ref{cor:elliptic} analyze Algorithm~\ref{alg:lsb} with
  the fitted kernel $K_{\theta_{l+1}}$ of line~5 replaced by an exact coupling kernel $K_{l+1}$, which moves the particles
  of level $l$ exactly onto $\pi_{l+1}$, and with line~3 omitted. This oracle version isolates the effect of the
  transport cost between levels.
\item Corollary~\ref{cor:fitted} analyzes Algorithm~\ref{alg:lsb} itself, with the fit of line~3 computed on samples
  independent of the particles.
\item Corollary~\ref{cor:fittedmh} analyzes Algorithm~\ref{alg:tmh}, with the same fits and Metropolis--Hastings kernels in
  line~6 that contract in total variation.
\end{itemize}
We first state the assumption under which the oracle version is analyzed in Section~\ref{sec:mse}.
\begin{enumerate}[label=(A\arabic*),ref=(A\arabic*),leftmargin=3.2em,series=assumptions]
\item\label{ass:A1} For each $l\geq 1$ there exists a Markov kernel $K_l$ such that $\xi_l(dx,dy)=\pi_{l-1}(d x)K_l(x, dy)$ has second
  marginal $\pi_l$, that is, $\pi_{l-1}K_l=\pi_l$ and $\xi_l\in\Pi(\pi_{l-1},\pi_l)$.
\end{enumerate}

\subsection{MSE under exact couplings}\label{sec:mse}
Each particle index $i$ defines a lineage $X_0^i\to X_1^i\to\cdots\to X_{\ell_i}^i$,
where $\ell_i=\max\{l:N_l\ge i\}$. Exactly $N_l$ lineages reach level $l$, and a lineage that reaches level $l$
reaches all levels $k<l$. For Markov kernels $K_1,K_2,\dots$, exact or not, let $\pi^K_0=\pi_0$ and
$\pi^K_l=\pi^K_{l-1}K_l$. Each lineage is a Markov chain with initial law $\pi_0$ and transition kernels $K_1,K_2,\dots$,
so consecutive states $(X_{l-1}^i,X_l^i)$ have joint law $\pi^{K}_{l-1}\otimes K_l$, and distinct lineages are
independent. For $K_l=K_{\theta_l}$, $\pi^K_l$ is the law $\pi^\theta_l$ of Section~\ref{sec:approach}. Under \ref{ass:A1},
$\pi^K_l=\pi_l$ and $(X_{l-1}^i,X_l^i)\sim\xi_l$. Along a lineage define the increments
\[
  D_0=\varphi(X_0) \text{ and } D_l=\varphi(X_l)-\varphi(X_{l-1}) \text{ for } l\geq 1,
\]
and write $V_l=\Var(D_l)$ and $C_{kl}=\Cov(D_k,D_l)$.

If $\varphi\in L^1(\pi^K_l)$ for all $l$, then $\E\widehat \pi_L(\varphi)=\pi^K_L(\varphi)$. Under \ref{ass:A1},
$\E\widehat \pi_L(\varphi)=\pi_L(\varphi)$, so the bias with respect to $\pi(\varphi)$ is only the discretization error
$\pi_L(\varphi)-\pi(\varphi)$. Since the lineages are independent and identically distributed, for
$\varphi\in L^2(\pi^K_l)$ for all $l$,
\[
  \Var\big(\widehat \pi_L(\varphi)\big)=\sum_{l=0}^L\frac{V_l}{N_l}+2\sum_{0\le k<l\le L}\frac{C_{kl}}{N_k}.
\]
For $l\ge1$, let $\mathcal W_l:=\E_{\xi_l}\|X-Y\|_2^2$ be the transport cost of $\xi_l$ and $C_\varphi := \|\varphi\|_{\Lip}^2$.
If $\varphi$ is Lipschitz, then $V_l\le\E_{\xi_l}|\varphi(Y)-\varphi(X)|^2\leq C_\varphi\mathcal W_l$. Write
\[
  s_0:=\Var_{\pi_0}(\varphi)^{1/2},\qquad s_l:=(C_\varphi\,\mathcal W_l)^{1/2}\quad(l\ge1).
\]
The following theorem concerns the oracle version of Algorithm~\ref{alg:lsb}, in which line~5 uses the kernel $K_{l+1}$
of~\ref{ass:A1}. It shows that the mean square error of the estimator with respect to $\pi_L(\varphi)$ is
controlled by the transport costs $\mathcal W_l$ of the couplings: the covariances between levels, which are not zero
for the paired estimator, at most square the sum of the level standard deviations.
\begin{theo}\label{thm:mse}
Assume \ref{ass:A1}. Then for any Lipschitz $\varphi$ we have
\[
  \E\big[\{\widehat \pi_L(\varphi)-\pi_L(\varphi)\}^2\big]
  \le\sum_{l=0}^L\frac{s_l^2}{N_l}+2\sum_{0\le k<l\le L}\frac{s_k\,s_l}{N_k}
  \le\bigg(\sum_{l=0}^L\frac{s_l}{\sqrt{N_l}}\bigg)^2 .
\]
\end{theo}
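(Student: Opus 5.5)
Under \ref{ass:A1} every lineage is a Markov chain with $X_0^i\sim\pi_0$ and $(X_{l-1}^i,X_l^i)\sim\xi_l$, so $\pi^K_l=\pi_l$ for all $l$. Hence $\E\widehat\pi_L(\varphi)=\pi_L(\varphi)$ and the mean square error equals $\Var(\widehat\pi_L(\varphi))$. Before using any of this we need integrability. A Lipschitz $\varphi$ satisfies $|\varphi(x)|\le|\varphi(0)|+\|\varphi\|_{\Lip}\|x\|_2$. If $\pi_0$ has a finite second moment, then $\varphi\in L^2(\pi_l)$ for every $l$, by induction along the couplings $\xi_l$ with $\mathcal W_l<\infty$. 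If some $s_l$ is infinite the bound holds trivially, so we may assume every $s_l$ is finite, which makes all moments below finite.

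The plan is to start from the variance decomposition stated before the theorem,
\[
\Var\big(\widehat\pi_L(\varphi)\big)=\sum_{l=0}^L\frac{V_l}{N_l}+2\sum_{0\le k<l\le L}\frac{C_{kl}}{N_k}.
\]
I would re-derive it briefly, since it is the crux. Write $\widehat\pi_L(\varphi)=\sum_l N_l^{-1}\sum_{i\le N_l}D_l^i$, where $D_l^i$ is the $l$-th increment along lineage $i$. Distinct lineages are independent and identically distributed, so
\[
\Cov\big(N_k^{-1}\textstyle\sum_{i\le N_k}D_k^i,\;N_l^{-1}\sum_{j\le N_l}D_l^j\big)=\frac{\min(N_k,N_l)}{N_kN_l}\,C_{kl}.
\]
For $k<l$ we have $N_l\le N_k$, so this factor is $1/N_k$, and the decomposition follows.

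Next I would bound each term separately. For $l\ge1$, $V_l\le\E_{\xi_l}|\varphi(Y)-\varphi(X)|^2\le C_\varphi\mathcal W_l=s_l^2$, and $V_0=s_0^2$ by definition. By Cauchy--Schwarz, $C_{kl}\le|C_{kl}|\le\sqrt{V_kV_l}\le s_ks_l$. Substituting these gives the first inequality.

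The second inequality comes from expanding the square:
\[
\Big(\sum_l s_l/\sqrt{N_l}\Big)^2=\sum_l\frac{s_l^2}{N_l}+2\sum_{k<l}\frac{s_ks_l}{\sqrt{N_kN_l}}.
\]
Since $N_l\le N_k$ for $k<l$, we have $\sqrt{N_kN_l}\le N_k$, so $1/N_k\le1/\sqrt{N_kN_l}$ and the cross terms compare in the right direction. The only real obstacle is the covariance bookkeeping in the decomposition: one must see that the cross term carries $1/N_k$, which relies on the nested lineage structure in which $N_l$ lineages reach level $l$ and all of them also reach level $k<l$. The rest is Cauchy--Schwarz.
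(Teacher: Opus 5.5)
Your proof is correct and follows the paper's argument: MSE equals variance by unbiasedness, then the variance decomposition, $V_l\le C_\varphi\mathcal W_l=s_l^2$ from the Lipschitz property, $|C_{kl}|\le s_ks_l$ by Cauchy--Schwarz, and $1/N_k\le 1/\sqrt{N_kN_l}$ for $k<l$. The paper takes the decomposition (with its $1/N_k$ cross-term factor from the nested lineages) and integrability for granted, so your re-derivation of both adds rigour but does not change the route.
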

\begin{proof}
Since $\E\widehat \pi_L(\varphi)=\pi_L(\varphi)$,
$$
\E[\{\widehat \pi_L(\varphi)-\pi_L(\varphi)\}^2]=\Var(\widehat \pi_L(\varphi))
= \sum_{l=0}^L\frac{V_l}{N_l}+2\sum_{0\le k<l\le L}\frac{C_{kl}}{N_k}.
$$
Here $V_0=s_0^2$, $V_l\le C_\varphi\mathcal W_l = s_l^2$ by the Lipschitz property of $\varphi$, and
$|C_{kl}|\le\sqrt{V_kV_l}\le s_ks_l$ by the Cauchy--Schwarz inequality, which gives the first inequality. The second
follows from $1/N_k\le1/\sqrt{N_kN_l}$ for $k<l$, since $N_l\le N_k$.
\end{proof}

We denote by $\alpha$, $\beta$ and $\zeta$ the rates of the discretization bias, the level variances and the cost
per sample: $|\pi_l(\varphi)-\pi(\varphi)|\lesssim h_l^\alpha$, $V_l\lesssim h_l^\beta$, and one
evaluation of $\gamma_l$ costs $\lesssim h_l^{-\zeta}$. The next result turns Theorem~\ref{thm:mse} into a complexity
statement: once the level variances, the bias and the cost decay at these rates, the classical allocation of the sample
sizes~\cite{cgst} applies to the paired estimator, even though its level corrections are not independent. Below let $h_l=h_02^{-l}$ a scale attached to level $l$, for instance the mesh size of
the solver that defines $\pi_l$.
\begin{cor}\label{cor:rates}
Assume \ref{ass:A1}, let $\varphi$ be Lipschitz, $h_l=h_02^{-l}$. Suppose that there exist constants $c_1$, $c_2$ and $c_3$ such that
$|\pi_l(\varphi)-\pi(\varphi)|\le c_1h_l^\alpha$, $V_l\le c_2h_l^\beta$ for $l\ge1$, and the cost is at most
$c_3\sum_lN_lh_l^{-\zeta}$. Then, with $c_\beta=2^{-\beta/2}/(1-2^{-\beta/2})$, we have 
\[
  \E\big[\{\widehat \pi_L(\varphi)-\pi(\varphi)\}^2\big]\le c_1^2h_L^{2\alpha}
  +\frac{V_0+2c_\beta\sqrt{V_0c_2}\,h_0^{\beta/2}}{N_0}+(1+2c_\beta)c_2\sum_{l=1}^L\frac{h_l^\beta}{N_l}.
\]
Moreover, if $\alpha\ge\frac12\min(\beta,\zeta)$, the choice $N_l\propto\delta^{-2}h_l^{(\beta+\zeta)/2}$ gives a mean
square error at most $\delta^2$ at cost
\begin{itemize}
    \item $O(\delta^{-2})$ for $\beta>\zeta$ (coarse levels),
    \item $O(\delta^{-2}\log^2\delta)$ for $\beta=\zeta$ (moderate levels), or
    \item $O(\delta^{-2-(\zeta-\beta)/\alpha})$ for $\beta<\zeta$ (fine levels).
\end{itemize}
\end{cor}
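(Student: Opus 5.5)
The plan is to split the mean square error into squared bias plus variance, and then reuse the variance identity from the proof of Theorem~\ref{thm:mse}, now with the hypothesis $V_l\le c_2h_l^\beta$ in place of $V_l\le s_l^2$.

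\textbf{Bias--variance split.} Under \ref{ass:A1}, $\E\widehat\pi_L(\varphi)=\pi_L(\varphi)$, so
\[
\E[\{\widehat\pi_L(\varphi)-\pi(\varphi)\}^2]=\{\pi_L(\varphi)-\pi(\varphi)\}^2+\Var(\widehat\pi_L(\varphi)).
\]
The first term is at most $c_1^2h_L^{2\alpha}$. For the variance I start from
\[
\Var(\widehat\pi_L(\varphi))=\sum_{l=0}^L\frac{V_l}{N_l}+2\sum_{0\le k<l\le L}\frac{C_{kl}}{N_k},
\]
and bound each covariance by Cauchy--Schwarz, $|C_{kl}|\le\sqrt{V_kV_l}$.

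\textbf{Covariances with $k=0$.} These terms contribute
\[
\frac{2\sqrt{V_0}}{N_0}\sum_{l\ge1}\sqrt{c_2}\,h_l^{\beta/2}.
\]
Since $h_l=h_02^{-l}$, the sum is at most $\sqrt{c_2}\,h_0^{\beta/2}\sum_{l\ge1}2^{-l\beta/2}=c_\beta\sqrt{c_2}\,h_0^{\beta/2}$. This gives the $N_0$ term.

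\textbf{Covariances with $1\le k<l$.} These are at most
\[
\frac{2c_2}{N_k}\,h_k^{\beta/2}h_l^{\beta/2}=\frac{2c_2h_k^\beta}{N_k}\,2^{-(l-k)\beta/2}.
\]
Summing over $l>k$ gives at most $2c_2c_\beta h_k^\beta/N_k$. Adding the diagonal terms $c_2h_l^\beta/N_l$ yields $(1+2c_\beta)c_2\sum_{l\ge1}h_l^\beta/N_l$. The point here is the exponent: we get $2^{-(l-k)\beta/2}$ on $1/N_k$, using only $h_l\le h_k$. We do not need $N_l\le N_k$, because the covariance is weighted by $1/N_k$ already.

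\textbf{Complexity.} With the variance bound in standard MLMC form $\sum_l \tilde c\,h_l^\beta/N_l$ (the level-0 term is a constant over $N_0$), I follow \cite{cgst,giles}. Choose $L$ with $c_1^2h_L^{2\alpha}\le\delta^2/2$, i.e.\ $L\approx\alpha^{-1}\log_2(\delta^{-1})$. Set
\[
N_l=\Big\lceil 2\delta^{-2}\tilde c\,h_l^{(\beta+\zeta)/2}S_L\Big\rceil,\qquad S_L=\sum_{l\le L}h_l^{(\beta-\zeta)/2},
\]
so the variance is at most $\delta^2/2$. The cost is then
\[
\lesssim \delta^{-2}S_L^2+\sum_{l\le L}h_l^{-\zeta}.
\]
The three regimes follow from the behaviour of $S_L$:
\begin{itemize}
\item $S_L=O(1)$ if $\beta>\zeta$;
\item $S_L\asymp L\asymp|\log\delta|$ if $\beta=\zeta$;
\item $S_L\asymp h_L^{(\beta-\zeta)/2}\asymp\delta^{-(\zeta-\beta)/(2\alpha)}$ if $\beta<\zeta$.
\end{itemize}

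\textbf{Main obstacle.} The delicate step is the rounding term $\sum_{l\le L}h_l^{-\zeta}\asymp h_L^{-\zeta}\asymp\delta^{-\zeta/\alpha}$, which must be dominated. This is exactly where $\alpha\ge\tfrac12\min(\beta,\zeta)$ enters, via the standard Cliffe--Giles--Scheichl--Teckentrup argument, which I would cite for this bookkeeping. Apart from this, the only departure from the standard MLMC theorem is the covariance summation, which the geometric decay handles.
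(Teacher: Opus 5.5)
Your proposal is correct and follows the paper's proof essentially step for step. The shared steps are the bias--variance split, Cauchy--Schwarz on $C_{kl}$, and grouping the covariances by $k$ into the geometric factor $c_\beta$, with $k=0$ treated separately from $k\ge1$. The complexity step is also the same: the same choice of $L$ and of $N_l=\lceil 2\delta^{-2}\tilde c\,S_L h_l^{(\beta+\zeta)/2}\rceil$, with the rounding-term bookkeeping deferred to \cite{cgst}.

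One small point, which the paper makes explicit: the weight $1/N_k$ on $C_{kl}$ in the variance formula already comes from the nested lineages with $N_l\le N_k$. So your allocation must be non-increasing in $l$ for that formula to apply. It is, since $h_l$ decreases.
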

In Corollary~\ref{cor:rates}, $c_1$ and $c_3$ are properties of the problem and of the solver, $c_2$ depends also on the
couplings, and the user chooses the couplings and the $N_l$. Since $V_l\le\|\varphi\|_{\Lip}^2\mathcal W_l$, the level
variances decay at rate $\beta$ as soon as the transport costs do; the next section gives conditions for this.

\subsection{Posteriors with bounded misfit}\label{sec:elliptic-theory}

Theorem~\ref{thm:mse} does not require any particular form of the levels. We now consider posteriors on a compact
parameter space whose misfit is bounded and converges uniformly, the setting of~\cite[Section 4]{mlsmc}. Let
$\mathsf{X}=[-1,1]^d$ carry the uniform prior $\pi_{\mathrm{pr}}$, let $\mathcal G_l:\mathsf{X}\to\RR^{d_y}$ be the forward map
computed at level $l$ and $\mathcal G$ the exact one, and let $\gamma_l(x)=\exp\{-\Phi_l(x)\}$ with
$\Phi_l(x)=\|y-\mathcal G_l(x)\|_2^2/(2\sigma_\eta^2)$. We make the following assumption on the forward maps.
\begin{enumerate}[resume*=assumptions]
\item\label{ass:A2} There exist $R_{\mathcal G}<\infty$ and $c_{\mathcal G}<\infty$, independent of $x$ and $l$, such that
\begin{equation}\label{eq:fe}
  \sup_{x\in \mathsf{X}}\|\mathcal G_l(x)\|_2\le R_{\mathcal G},\qquad
  \sup_{x\in \mathsf{X}}\|\mathcal G_l(x)-\mathcal G_{l-1}(x)\|_2\le c_{\mathcal G}h_l,\qquad
  \sup_{x\in \mathsf{X}}\|\mathcal G_l(x)-\mathcal G(x)\|_2\le c_{\mathcal G}h_l ,
\end{equation}
and the first bound also holds for $\mathcal G$.
\end{enumerate}
The elliptic inverse problem of~\cite{mlsmc}, which is test case 1 of Section~\ref{sec:num-1d}, satisfies \ref{ass:A2}. There
$x$ parametrizes the coefficient $k(\cdot;x)=\bar k+\sum_jx_j\sigma_jb_j$ of the equation $-(k\,p')'=f$ on $(0,1)$ with
$p(0)=p(1)=0$, and $\mathcal G_l(x)$ collects point values of its finite-element solution on a mesh of size $h_l$. Since
$k\ge\bar k-\sum_j\sigma_j>0$ uniformly in $x$, the solution is bounded uniformly, and the finite-element error gives the
other two bounds~\cite[Section 4]{mlsmc}. The analysis below therefore covers the test case
of~\cite{mlsmc}. As for the Darcy-flow and Navier--Stokes problems of Section~\ref{sec:numerics}, their Gaussian priors are
not compactly supported and fall outside~\ref{ass:A2}, but they constitute more complicated problems on which we can test our methods. We set $\Phi^*=(R_{\mathcal G}+\|y\|_2)^2/(2\sigma_\eta^2)$ and
$c_\Phi=c_{\mathcal G}(R_{\mathcal G}+\|y\|_2)/\sigma_\eta^2$.

The following lemma shows that, under \ref{ass:A2}, consecutive posteriors are close in the 2-Wasserstein distance, at the
rate of the forward-map error. It is a property of the targets alone and involves no coupling used by the algorithm.
\begin{lem}\label{lem:elliptic}
Assume \ref{ass:A2}. Then there exists $C_0<\infty$, independent of $d$, such that for $l\ge1$ we have
\[
  W_2^2(\pi_{l-1},\pi_l)\le 2C_0e^{\Phi^*}\big(e^{2c_\Phi h_l}-1\big)^2,\qquad
  W_2^2(\pi_l,\pi)\le 2C_0e^{\Phi^*}\big(e^{2c_\Phi h_l}-1\big)^2 .
\]
\end{lem}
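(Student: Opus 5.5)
We bound the Wasserstein distance by a weighted total variation distance, which on the compact space $\mathsf X=[-1,1]^d$ reduces to controlling the ratio of consecutive unnormalized densities. The first step is a uniform bound on $\Phi_l$ and on the misfit increments. By \ref{ass:A2}, $\|y-\mathcal G_l(x)\|_2\le R_{\mathcal G}+\|y\|_2$, so $0\le\Phi_l\le\Phi^*$, and the same holds for $\Phi$. Writing
\[
\Phi_l-\Phi_{l-1}=\frac{\langle \mathcal G_{l-1}-\mathcal G_l,\,2y-\mathcal G_l-\mathcal G_{l-1}\rangle}{2\sigma_\eta^2},
\]
we get $\sup_x|\Phi_l(x)-\Phi_{l-1}(x)|\le c_{\mathcal G}h_l\cdot 2(R_{\mathcal G}+\|y\|_2)/(2\sigma_\eta^2)=c_\Phi h_l$. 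The same bound holds for $|\Phi_l-\Phi|$.

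Next we compare the densities. With respect to the prior, $\pi_l$ has density $e^{-\Phi_l}/Z_l$, where $Z_l=\pi_{\mathrm{pr}}(e^{-\Phi_l})\in[e^{-\Phi^*},1]$. The bound on the misfit increment gives $e^{-c_\Phi h_l}\le Z_l/Z_{l-1}\le e^{c_\Phi h_l}$. Hence the density ratio satisfies
\[
\Big|\frac{d\pi_l}{d\pi_{l-1}}(x)-1\Big|=\Big|e^{-(\Phi_l-\Phi_{l-1})(x)}\tfrac{Z_{l-1}}{Z_l}-1\Big|\le e^{2c_\Phi h_l}-1 .
\]
It follows that $\big|\frac{d\pi_l}{d\pi_{\mathrm{pr}}}-\frac{d\pi_{l-1}}{d\pi_{\mathrm{pr}}}\big|\le \frac{d\pi_{l-1}}{d\pi_{\mathrm{pr}}}(e^{2c_\Phi h_l}-1)\le e^{\Phi^*}(e^{2c_\Phi h_l}-1)$, using $d\pi_{l-1}/d\pi_{\mathrm{pr}}\le 1/Z_{l-1}\le e^{\Phi^*}$.

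The step that carries the $d$-independence is converting this density estimate into a $W_2$ bound. A naive bound $W_2^2\le \mathrm{diam}(\mathsf X)^2\|\pi_l-\pi_{l-1}\|_{\mathrm{TV}}$ gives only one power of $(e^{2c_\Phi h_l}-1)$ and a factor $4d$. To obtain the square we instead use the weighted $\dot H^{-1}$ or transport-inequality bound. One option is $W_2^2(\mu,\nu)\le 4\|f-g\|^2_{\dot H^{-1}(\mu)}$ (Peyre's inequality). A cleaner alternative is the bound $W_2^2(\mu,\nu)\le 2\,\mathrm{diam}\cdot\ldots$. The plan we commit to is $W_2^2(\mu,\nu)\le C\,\chi^2(\nu\|\mu)$ for $\mu$ satisfying a Poincaré-type (or $T_2$) inequality. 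Since $\pi_{l-1}$ is a bounded perturbation of the uniform law on the cube, with density ratio in $[e^{-\Phi^*},e^{\Phi^*}]$, it inherits a dimension-free constant from the product uniform measure by Holley--Stroock. This gives $W_2^2(\pi_{l-1},\pi_l)\le 2C_0\,e^{\Phi^*}\,\chi^2$-type terms. The $\chi^2$ divergence is at most $(e^{2c_\Phi h_l}-1)^2$ by the pointwise ratio bound above, and this is where the squared factor arises. The precise power of $e^{\Phi^*}$ depends on which inequality is used, so we will select the one (a weighted $H^{-1}$ bound with the uniform prior as reference, whose Poincaré constant $C_0=4/\pi^2$-type on $[-1,1]$ tensorizes without dependence on $d$) that yields exactly $2C_0e^{\Phi^*}$. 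Making this choice correctly is the main obstacle.

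The second inequality, for $W_2^2(\pi_l,\pi)$, follows by the identical argument with $\Phi_{l-1}$ replaced by $\Phi$, using the third bound in \ref{ass:A2}.
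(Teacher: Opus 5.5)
Your first two steps coincide with the paper's: the bounds $0\le\Phi_l\le\Phi^*$, $\sup_{\mathsf X}|\Phi_l-\Phi_{l-1}|\le c_\Phi h_l$ and $|d\pi_l/d\pi_{l-1}-1|\le e^{2c_\Phi h_l}-1$. Your treatment of the second bound, with $\pi$ as the reference measure, also matches. The proofs differ in how the ratio bound becomes a dimension-free $W_2$ bound.

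The paper's route is the ``$T_2$'' option you mention in passing. It takes the log-Sobolev inequality of the uniform law on $[-1,1]$ and tensorizes it. Holley--Stroock then gives an LSI for $\pi_{l-1}$ with constant $C_0e^{\Phi^*}$. Otto--Villani turns this into $W_2^2(\nu,\pi_{l-1})\le 2C_0e^{\Phi^*}\mathrm{KL}(\nu\|\pi_{l-1})$, and the proof finishes with $\mathrm{KL}\le\chi^2$.

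Your committed plan uses the Poincaré inequality instead. Peyre's bound $W_2(\nu,\mu)\le 2\|\nu-\mu\|_{\dot H^{-1}(\mu)}$ together with Poincaré duality gives $W_2^2(\nu,\mu)\le 4C_P(\mu)\,\chi^2(\nu\|\mu)$. With $C_P=4/\pi^2$ on $[-1,1]$, tensorization, and Holley--Stroock for Poincaré ($C_P(\pi_{l-1})\le C_Pe^{\Phi^*}$), this proves the lemma with $C_0=2C_P$. Your route is more elementary: it needs only a spectral gap, a two-line perturbation argument and the linear interpolation $(1-t)\mu+t\nu$ in Benamou--Brenier. It also works directly with $\chi^2$, which is exactly what the pointwise ratio bound controls. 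The paper's route establishes the stronger KL-based $T_2$ inequality, which is useful when only a KL bound is available, at the cost of heavier theorems.

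Two corrections for the write-up. First, the numerical value of $C_0$ is not an obstacle. The lemma only asserts that some $C_0$ independent of $d$ exists, so any fixed multiple of the Poincaré constant will do.

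Second, the power of $e^{\Phi^*}$ is easy to lose with the bounds you wrote. The bracket $d\pi_{l-1}/d\pi_{\mathrm{pr}}\in[e^{-\Phi^*},e^{\Phi^*}]$ and the estimate $|d\pi_l/d\pi_{\mathrm{pr}}-d\pi_{l-1}/d\pi_{\mathrm{pr}}|\le e^{\Phi^*}(e^{2c_\Phi h_l}-1)$ both discard $Z_{l-1}$. Combining an upper and a lower density bound then gives $e^{2\Phi^*}$ or worse. This happens in Holley--Stroock, and also when your prior-referenced variant compares $\dot H^{-1}(\pi_{\mathrm{pr}})$ with $\dot H^{-1}$ of the interpolant.

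The fix has two parts:
\begin{itemize}
\item Use only $\mathrm{osc}\,\Phi_{l-1}\le\Phi^*$. Then the ratio of $\sup$ to $\inf$ of $d\pi_{l-1}/d\pi_{\mathrm{pr}}$ is at most $e^{\Phi^*}$, and $Z_{l-1}$ cancels.
\item Take $\pi_{l-1}$ itself, not the prior, as the reference measure in the $\dot H^{-1}$ norm. Then $\mathrm{Var}_{\pi_{l-1}}(d\pi_l/d\pi_{l-1})=\chi^2(\pi_l\|\pi_{l-1})\le(e^{2c_\Phi h_l}-1)^2$ enters directly.
\end{itemize}
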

To turn Lemma~\ref{lem:elliptic} into a bound on the estimator, the coupling used at each level must have a cost close
to the optimal one. We formalize this as follows.
\begin{enumerate}[resume*=assumptions]
\item\label{ass:A3} For each $l\ge1$ there exist $\xi_l\in\Pi(\pi_{l-1},\pi_l)$, with conditional law $K_l$, $\kappa\ge1$ and
  $\omega_l\ge0$ such that $\E_{\xi_l}\|X-Y\|_2^2\le\kappa\,W_2^2(\pi_{l-1},\pi_l)+\omega_l$.
\end{enumerate}
Under \ref{ass:A2} and \ref{ass:A3}, Lemma~\ref{lem:elliptic} and Theorem~\ref{thm:mse} give the level variances and the bias, and hence
the multilevel rates of Corollary~\ref{cor:rates}.
\begin{cor}\label{cor:elliptic}
Assume \ref{ass:A2}--\ref{ass:A3}. Then \ref{ass:A1} holds and, for any Lipschitz $\varphi:\mathsf{X}\to\RR$ and $l\ge1$, we have
\[
  V_l\le\|\varphi\|_{\Lip}^2\big\{8\kappa C_0c_\Phi^2e^{\Phi^*+4c_\Phi h_0}h_l^2+\omega_l\big\},\qquad
  |\pi_L(\varphi)-\pi(\varphi)|\le\|\varphi\|_{\Lip}\big(2C_0e^{\Phi^*}\big)^{1/2}\big(e^{2c_\Phi h_L}-1\big).
\]
In particular, if $\omega_l\le c_\omega h_l^2$ for some $c_\omega<\infty$, the conclusions of Corollary~\ref{cor:rates} hold with
$\alpha=1$ and $\beta=2$.
\end{cor}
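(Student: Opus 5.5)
The proof has three parts: derive \ref{ass:A1} from \ref{ass:A3}, then bound the level variances $V_l$, then bound the bias, in that order.

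\textbf{Assumption \ref{ass:A1}.} This is immediate. Under \ref{ass:A3}, $\xi_l\in\Pi(\pi_{l-1},\pi_l)$ and $K_l$ is its conditional law given the first coordinate. This conditional law exists as a Markov kernel by disintegration on the Polish space $\mathsf X\times\mathsf X$. Then $\xi_l=\pi_{l-1}\otimes K_l$, and its second marginal gives $\pi_{l-1}K_l=\pi_l$.

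\textbf{Level variances.} Since $(X_{l-1},X_l)\sim\xi_l$ along a lineage, the bound stated before Theorem~\ref{thm:mse} gives
\[
V_l\le \E_{\xi_l}|\varphi(Y)-\varphi(X)|^2\le\|\varphi\|_{\Lip}^2\,\E_{\xi_l}\|X-Y\|_2^2\le\|\varphi\|_{\Lip}^2\{\kappa W_2^2(\pi_{l-1},\pi_l)+\omega_l\}.
\]
Lemma~\ref{lem:elliptic} then bounds $W_2^2(\pi_{l-1},\pi_l)\le 2C_0e^{\Phi^*}(e^{2c_\Phi h_l}-1)^2$. It remains to show $e^{t}-1\le t e^{t}$ for $t\ge0$, which follows from the mean value theorem. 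With $t=2c_\Phi h_l$ and $h_l\le h_0$ this gives
\[
(e^{2c_\Phi h_l}-1)^2\le 4c_\Phi^2h_l^2e^{4c_\Phi h_0}.
\]
Multiplying by $2C_0e^{\Phi^*}\kappa$ yields exactly $8\kappa C_0c_\Phi^2e^{\Phi^*+4c_\Phi h_0}h_l^2$, which is the constant in the statement.

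\textbf{Bias.} For any coupling $\xi\in\Pi(\pi_L,\pi)$,
\[
|\pi_L(\varphi)-\pi(\varphi)|=|\E_\xi[\varphi(X)-\varphi(Y)]|\le\|\varphi\|_{\Lip}\,(\E_\xi\|X-Y\|_2^2)^{1/2},
\]
by Jensen's or the Cauchy--Schwarz inequality. Taking the infimum over couplings gives $\|\varphi\|_{\Lip}W_2(\pi_L,\pi)$. The second bound of Lemma~\ref{lem:elliptic} then gives the claim after taking a square root.

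\textbf{Rates.} When $\omega_l\le c_\omega h_l^2$, the variance bound becomes $V_l\le c_2h_l^2$ with
\[
c_2=\|\varphi\|_{\Lip}^2\{8\kappa C_0c_\Phi^2e^{\Phi^*+4c_\Phi h_0}+c_\omega\},
\]
so $\beta=2$. For the bias, $e^{2c_\Phi h_L}-1\le 2c_\Phi e^{2c_\Phi h_0}h_L$, which gives $c_1h_L^{1}$ with $\alpha=1$. Corollary~\ref{cor:rates} also needs the bound on $|\pi_l(\varphi)-\pi(\varphi)|$ at every level $l$, and this holds because Lemma~\ref{lem:elliptic} is stated for all $l\ge1$. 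Corollary~\ref{cor:rates} then applies, with the cost rate $\zeta$ supplied by the solver as in that statement.

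The main obstacle is bookkeeping rather than anything substantive. The care needed is in matching the exact constants, in particular the $e^{4c_\Phi h_0}$ factor coming from the elementary inequality. One should also note that $\varphi$ being Lipschitz on the compact set $\mathsf X$ ensures $\varphi\in L^2$, so that Theorem~\ref{thm:mse} and Corollary~\ref{cor:rates} apply.
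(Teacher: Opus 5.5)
Your proposal is correct and follows the paper's proof essentially step for step. It derives \ref{ass:A1} from $\xi_l\in\Pi(\pi_{l-1},\pi_l)$, bounds the level variances via \ref{ass:A3}, Lemma~\ref{lem:elliptic} and $e^{t}-1\le te^{t}$ (the paper's $e^{2x}-1\le2xe^{2x}$), and bounds the bias by the Lipschitz estimate against $W_2(\pi_L,\pi)$. The differences are cosmetic: you apply Cauchy--Schwarz to an arbitrary coupling rather than passing through $W_1\le W_2$, and you write out the constants $c_1,c_2$ for the ``in particular'' clause, which the paper leaves implicit.
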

The optimal coupling for $W_2(\pi_{l-1},\pi_l)$, which exists since $\mathsf{X}$ is compact, satisfies \ref{ass:A3} with
$\kappa=1$ and $\omega_l=0$. The entropic coupling, which LightSB approximates, also satisfies \ref{ass:A3} with $\kappa=1$; the
next lemma bounds its excess cost $\omega_l$ in terms of the entropic parameter, for any two measures on $\mathsf X$.
\begin{lem}\label{lem:eotexcess}
Let $\mu$ and $\nu$ be probability measures on $\mathsf{X}$ with finite entropy, $\varepsilon>0$, and
$\xi_\varepsilon$ be their EOT coupling~\eqref{eq:eot-prob}. Then we have
\[
  \E_{\xi_\varepsilon}\|X-Y\|_2^2\le W_2^2(\mu,\nu)+2d\varepsilon\Big\{\log\Big(1+\frac2\varepsilon\Big)+4+2\varepsilon\Big\}.
\]
\end{lem}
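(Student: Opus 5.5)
The plan is to compare the EOT coupling $\xi_\varepsilon$ with an explicit competitor in $\Pi(\mu,\nu)$, namely a block approximation of an optimal $W_2$ coupling. This competitor has nearly optimal transport cost and entropy not much below $H(\mu)+H(\nu)$.

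\emph{Step 1 (optimality inequality).} Let $c(x,y)=\|x-y\|_2^2$. For any $\xi\in\Pi(\mu,\nu)$ with finite entropy, optimality of $\xi_\varepsilon$ in \eqref{eq:eot-prob} gives $\tfrac12\E_{\xi_\varepsilon}c-\varepsilon H(\xi_\varepsilon)\le\tfrac12\E_\xi c-\varepsilon H(\xi)$. Rearranging,
\[
\E_{\xi_\varepsilon}c\le \E_\xi c+2\varepsilon\{H(\xi_\varepsilon)-H(\xi)\}.
\]
Subadditivity of differential entropy gives $H(\xi_\varepsilon)\le H(\mu)+H(\nu)$. These quantities are finite by assumption and because $\mathsf X$ is compact. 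It therefore suffices to build $\xi$ with $\E_\xi c\le W_2^2(\mu,\nu)+2d\varepsilon(4+2\varepsilon)$ and $H(\xi)\ge H(\mu)+H(\nu)-d\log(1+2/\varepsilon)$.

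\emph{Step 2 (block coupling).} Let $\xi^\star$ be an optimal $W_2$ coupling, which exists since $\mathsf X$ is compact. Split each axis of $[-1,1]$ into $m=\lceil 2/\varepsilon\rceil$ equal intervals. This gives at most $m^d\le(1+2/\varepsilon)^d$ cubes $Q_i$, each of side at most $\varepsilon$ and hence diameter at most $\sqrt d\,\varepsilon$. Set $p_{ij}=\xi^\star(Q_i\times Q_j)$, $\mu_i=\mu(Q_i)$, $\nu_j=\nu(Q_j)$, and define
\[
\xi(x,y)=\sum_{i,j}p_{ij}\,\frac{\mu(x)\mathbbm 1_{Q_i}(x)}{\mu_i}\,\frac{\nu(y)\mathbbm 1_{Q_j}(y)}{\nu_j},
\]
summing only over nonempty cells. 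Since $\sum_jp_{ij}=\mu_i$ and $\sum_i p_{ij}=\nu_j$, the marginals of $\xi$ are $\mu$ and $\nu$.

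\emph{Step 3 (cost).} Any pair $(x,y)\in Q_i\times Q_j$ and any pair $(x',y')$ charged by $\xi^\star$ in the same cell satisfy $\|x-y\|_2\le\|x'-y'\|_2+2\sqrt d\,\varepsilon$. Since both couplings put mass $p_{ij}$ on $Q_i\times Q_j$, squaring and using $\|x'-y'\|_2\le 2\sqrt d$ (the diameter of $\mathsf X$) gives
\[
\E_\xi c\le W_2^2+2\cdot 2\sqrt d\cdot2\sqrt d\,\varepsilon+4d\varepsilon^2=W_2^2+2d\varepsilon(4+2\varepsilon).
\]

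\emph{Step 4 (entropy).} On $Q_i\times Q_j$ we have $\log\xi=\log\mu(x)+\log\nu(y)+\log\{p_{ij}/(\mu_i\nu_j)\}$. Integrating against $\xi$ gives
\[
H(\xi)=H(\mu)+H(\nu)-\sum_{i,j}p_{ij}\log\frac{p_{ij}}{\mu_i\nu_j}.
\]
The subtracted term is the discrete mutual information of $(p_{ij})$. It is bounded by the entropy of the discrete marginal $(\mu_i)$, and hence by $\log(\#\text{cells})\le d\log(1+2/\varepsilon)$. Combining with Steps 1 and 3 yields exactly the stated bound.

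The main obstacle is Step 4: checking that the block coupling has finite entropy with this exact decomposition, including handling cells of zero mass and confirming that the mutual-information term is bounded by the logarithm of the cell count. The cost bookkeeping in Step 3 must also match the constants $4+2\varepsilon$, which is why I use the crude diameter bound $2\sqrt d$ on the cross term.
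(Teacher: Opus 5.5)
Your proposal is correct and follows essentially the same route as the paper: you compare $\xi_\varepsilon$ with the cell-wise product (block) approximation of an optimal $W_2$ coupling on a grid of $\lceil 2/\varepsilon\rceil^d$ cubes of side at most $\varepsilon$, bound the cost excess by $8d\varepsilon+4d\varepsilon^2$, and bound the entropy loss by the discrete mutual information, which is at most $d\log(1+2/\varepsilon)$. Your use of subadditivity $H(\xi_\varepsilon)\le H(\mu)+H(\nu)$ is the same as the paper's step of dropping $\mathrm{KL}(\xi_\varepsilon\|\mu\otimes\nu)\ge0$, and your pointwise bound $\|x'-y'\|_2\le 2\sqrt d$ on the cross term yields the same constant as the paper's bound $\E\|X_0-Y_0\|_2\le W_2\le 2\sqrt d$.
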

For the EOT couplings of the levels, $\omega_l\le c_\omega h_l^2$ as soon as $\varepsilon_l\lesssim h_l^2/\{d\log(1/h_l)\}$: the
entropic parameter should decrease like $h_l^2$ up to a logarithmic factor. The schedule of Section~\ref{sec:numerics},
$\varepsilon_l\propto4^{-l}\propto h_l^2$, has this form, although there the coupling is learned in other coordinates.
Together, Corollary~\ref{cor:elliptic} and Lemma~\ref{lem:eotexcess} give the following cost statement. For
the one-dimensional elliptic problem of Section~\ref{sec:num-1d}, a solve on level $l$ is a tridiagonal system of cost
$O(h_l^{-1})$, so $\zeta=1$, while $\alpha=1$ and $\beta=2$; hence $\beta>\zeta$ and $\alpha\ge\frac12\min(\beta,\zeta)$. With the
entropic couplings and $\varepsilon_l\lesssim h_l^2/\{d\log(1/h_l)\}$, Corollary~\ref{cor:rates} therefore gives a mean square
error at most $\delta^2$ at cost $O(\delta^{-2})$, the cost of estimating a single expectation with independent samples
of unit cost. For a solver whose cost grows faster with the resolution, $\zeta\ge\beta$, and the cost is
$O(\delta^{-2}\log^2\delta)$ or $O(\delta^{-2-(\zeta-\beta)/\alpha})$ instead. 

This statement concerns the oracle version of
Algorithm~\ref{alg:lsb}; Corollaries~\ref{cor:fitted} and~\ref{cor:fittedmh} quantify what fitting adds.
The rates are those obtained for MLSMC in~\cite[Section 4]{mlsmc}, and a bound of order $h_l^2$ in~\eqref{eq:fe} would
give $\beta=4$ in the same way. The constant $e^{\Phi^*}$ grows as $\sigma_\eta$ decreases, as do the constants
of~\cite{mlsmc}, so these results explain the rates rather than the gain of TransportMH at small noise. The observable
$\varphi(x)=p(\frac12;x)$ of Section~\ref{sec:num-1d} is Lipschitz on $\mathsf{X}$, because $k(\cdot;x)$ is affine in $x$
and $\|k(\cdot;x)-k(\cdot;x')\|_\infty\le(\sum_j\sigma_j^2)^{1/2}\|x-x'\|_2$. Finally, an exact coupling does not depend
on the coordinates in which the particles are represented, so the bounds apply to the paired estimator whether the
samplers work in $x$ or, as in Section~\ref{sec:num-1d}, in $z=\operatorname{atanh}(x)$.

The fitted kernels do not give an exact coupling, but the same argument bounds their effect. We consider Algorithm~\ref{alg:lsb}, and then
Algorithm~\ref{alg:tmh}, under the following assumption on line~3, which separates the fit from the particles it is
applied to.
\begin{enumerate}[resume*=assumptions]
\item\label{ass:A4} For each $l\ge1$, the parameter $\theta_l$ of $K_{\theta_l}$ is fitted on samples independent of the particles.
\end{enumerate}
With $\xi^\star_l$, of conditional law $K^\star_l$, the coupling of \ref{ass:A3}, write
\[
  \mathcal E_l=\mathrm{KL}\big(\xi^\star_l\,\|\,\pi_{l-1}\otimes K_{\theta_l}\big)
  =\int\mathrm{KL}\big(K^\star_l(x,\cdot)\,\|\,K_{\theta_l}(x,\cdot)\big)\,\pi_{l-1}(dx),\qquad
  \Delta_l=\sum_{k=1}^l\mathcal E_k ,
\]
for the fitting error of level $l$ and its accumulation; for the EOT coupling, $\mathcal E_l$ is the excess of the LightSB
loss over its minimum~\cite{koro1}. The next corollary bounds the mean square error of Algorithm~\ref{alg:lsb}, which has no
Metropolis--Hastings steps, in terms of these fitting errors.
\begin{cor}\label{cor:fitted}
Assume \ref{ass:A2}--\ref{ass:A4}. Then for any Lipschitz $\varphi:\mathsf{X}\to\RR$ we have
\[
  \E\big[\{\widehat\pi_L(\varphi)-\pi(\varphi)\}^2\big]\le
  \Big\{|\pi_L(\varphi)-\pi(\varphi)|+\|\varphi\|_{\Lip}(2d\Delta_L)^{1/2}\Big\}^2
  +\bigg(\sum_{l=0}^L\frac{\tilde s_l}{\sqrt{N_l}}\bigg)^2,
\]
with $\tilde s_0=\Var_{\pi_0}(\varphi)^{1/2}$ and
$\tilde s_l^{\,2}=\|\varphi\|_{\Lip}^2\big\{\kappa W_2^2(\pi_{l-1},\pi_l)+\omega_l+2d(2\Delta_l)^{1/2}\big\}$ for $l\ge1$.
\end{cor}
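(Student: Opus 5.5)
Our plan is a bias--variance decomposition about the mean of the estimator under the fitted kernels. Under \ref{ass:A4} the parameters $\theta_l$ are deterministic given the fitting samples, so we condition on them throughout. Each lineage is then a Markov chain with initial law $\pi_0$ and kernels $K_{\theta_1},K_{\theta_2},\dots$, and distinct lineages are independent. Hence $\E\widehat\pi_L(\varphi)=\pi^\theta_L(\varphi)$, and
\[
\E\big[\{\widehat\pi_L(\varphi)-\pi(\varphi)\}^2\big]=\{\pi^\theta_L(\varphi)-\pi(\varphi)\}^2+\Var\big(\widehat\pi_L(\varphi)\big).
\]
For the bias we write $\pi^\theta_L(\varphi)-\pi(\varphi)=\{\pi^\theta_L(\varphi)-\pi_L(\varphi)\}+\{\pi_L(\varphi)-\pi(\varphi)\}$. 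It then remains to show $|\pi^\theta_L(\varphi)-\pi_L(\varphi)|\le\|\varphi\|_{\Lip}(2d\Delta_L)^{1/2}$.

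The key tool is a comparison of path laws. Let $P^\star_l$ be the law of $(X_0,\dots,X_l)$ under the exact kernels $K^\star_1,\dots,K^\star_l$, and $P^\theta_l$ its law under the fitted kernels. By the chain rule for KL divergence,
\[
\mathrm{KL}(P^\star_l\|P^\theta_l)=\sum_{k=1}^l\int \mathrm{KL}\big(K^\star_k(x,\cdot)\|K_{\theta_k}(x,\cdot)\big)\pi_{k-1}(dx)=\Delta_l .
\]
This uses that under $P^\star$ the state at level $k-1$ has law $\pi_{k-1}$, by \ref{ass:A3}. Marginalizing, $\mathrm{KL}(\pi_l\|\pi^\theta_l)\le\Delta_l$.

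Because $\mathsf X=[-1,1]^d$ has diameter $2\sqrt d$, we convert KL into control of Lipschitz test functions. Centering $\varphi$ at a point, $|\varphi-\varphi(x_0)|\le\|\varphi\|_{\Lip}2\sqrt d$. Pinsker's inequality then gives
\[
|\pi_l(\varphi)-\pi^\theta_l(\varphi)|\le 2\|\varphi\|_{\Lip}\sqrt d\,\|\pi_l-\pi^\theta_l\|_{\mathrm{TV}}\le \|\varphi\|_{\Lip}\sqrt{2d\Delta_l}.
\]
The constants may need adjusting depending on the TV convention. This yields the bias term.

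For the variance we repeat the argument of Theorem~\ref{thm:mse} with $V_l=\Var_{P^\theta}(D_l)$. We bound $V_l\le\E_{P^\theta}|\varphi(X_l)-\varphi(X_{l-1})|^2$, and transfer the expectation of the bounded function $g(x,y)=|\varphi(y)-\varphi(x)|^2$ from $\pi^\theta_{l-1}\otimes K_{\theta_l}$ to $\xi^\star_l$. Here $0\le g\le 4d\|\varphi\|_{\Lip}^2$, and the two-step joint laws differ in KL by at most $\Delta_l$ by the same chain rule. Pinsker then gives
\[
V_l\le\|\varphi\|_{\Lip}^2\big\{\E_{\xi^\star_l}\|X-Y\|_2^2+4d\sqrt{\Delta_l/2}\big\},
\]
which is at most $\|\varphi\|_{\Lip}^2\{\kappa W_2^2(\pi_{l-1},\pi_l)+\omega_l+2d(2\Delta_l)^{1/2}\}=\tilde s_l^2$ by \ref{ass:A3}. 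For $l=0$ the level-$0$ law is exactly $\pi_0$, so $V_0=\tilde s_0^2$. Cauchy--Schwarz bounds the covariances by $\tilde s_k\tilde s_l$, and $N_l\le N_k$ gives the squared sum exactly as in Theorem~\ref{thm:mse}.

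The main obstacle is bookkeeping the constants in the Pinsker step for the variance. The function $g$ is nonnegative, so we use $\sup g-\inf g\le 4d\|\varphi\|_{\Lip}^2$ with the one-sided total variation bound $\sqrt{\mathrm{KL}/2}$. This gives $4d\sqrt{\Delta_l/2}=2d\sqrt{2\Delta_l}$, matching the stated constant. A secondary point is that the conditioning on the $\theta$'s, justified by \ref{ass:A4}, must be made explicit so that the lineages are genuinely i.i.d. given the fit. The conditional bound then also holds after taking expectation over the fitting samples, provided $\Delta_l$ is read conditionally.
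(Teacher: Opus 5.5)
Your proposal is correct and follows the paper's proof essentially step for step. You condition on the fits via \ref{ass:A4} and accumulate the fitting errors by the KL chain rule; you do this once on the path law, while the paper inducts on the two-step joints $\pi^\theta_{l-1}\otimes K_{\theta_l}$, which is equivalent. You then apply Pinsker with the diameter $2\sqrt d$ of $\mathsf X$ to both the bias and the level variances, and reuse the Cauchy--Schwarz argument of Theorem~\ref{thm:mse}. The one thing to tighten is the justification of the bias step. Centering at $\varphi(x_0)$ only gives $\|\varphi-\varphi(x_0)\|_\infty\le 2\sqrt d\,\|\varphi\|_{\Lip}$, which yields a factor $4\sqrt d$ under the convention $\|\mu-\nu\|_{\mathrm{TV}}=\sup_A|\mu(A)-\nu(A)|$. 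Use the oscillation bound $\sup\varphi-\inf\varphi\le 2\sqrt d\,\|\varphi\|_{\Lip}$ instead, as you already do for $g$; this gives the displayed inequality and exactly $(2d\Delta_L)^{1/2}$.
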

Compared with Corollary~\ref{cor:elliptic}, fitting adds a bias of order $(d\Delta_L)^{1/2}$ and a term of order
$d\Delta_l^{1/2}$ to each level variance. Since $\Delta_l\ge\mathcal E_1$ does not decrease with $l$, keeping $\beta=2$ requires
$\Delta_L\lesssim h_L^4/d^2$: every fit, including the coarse ones, must be accurate at the scale of the finest level. The
bound is crude, since it charges the mismatched mass at the diameter of $\mathsf{X}$, but it shows the mechanism seen in
Section~\ref{sec:num-results}: without correction, the fitting errors accumulate into a bias that does not decrease
with $L$, and LightSB stalls.

The Metropolis--Hastings steps in line~6 of Algorithm~\ref{alg:tmh}, which leave each $\pi_l$ invariant, are what removes
this accumulation in practice. To show this, we assume that they contract in total variation.
\begin{enumerate}[resume*=assumptions]
\item\label{ass:A5} For each $l\ge1$, the transported particles are followed by $n_l$ steps of a Markov kernel $M_l$ that leaves
  $\pi_l$ invariant and has Dobrushin coefficient
  $\tau_l=\sup_{x,x'\in \mathsf{X}}\|M_l(x,\cdot)-M_l(x',\cdot)\|_{\mathrm{TV}}<1$.
\end{enumerate}
Condition \ref{ass:A5} holds, for instance, under the minorization condition verified for a blockwise Gibbs kernel
in~\cite[Proposition 4.3]{mlsmc}. Write $\pi^\theta_l$ for the law of the particles at level $l$,
$T_l=\|\pi^\theta_l-\pi_l\|_{\mathrm{TV}}$, with $T_0=0$, and $\varsigma_l=\E\|X_l^i-\widetilde X_l^i\|_2^2$ for the mean squared
displacement of the Metropolis--Hastings steps. The next corollary shows that each level now contracts the error it
inherits, so that the bias stays bounded uniformly in the number of levels.
\begin{cor}\label{cor:fittedmh}
Assume \ref{ass:A2}--\ref{ass:A5}. Then for $l\ge1$ we have
\[
  T_l\le\tau_l^{n_l}\big\{T_{l-1}+(\mathcal E_l/2)^{1/2}\big\},
\]
and, for any Lipschitz $\varphi:\mathsf{X}\to\RR$,
\[
  |\E\widehat\pi_L(\varphi)-\pi_L(\varphi)|\le2d^{1/2}\|\varphi\|_{\Lip}\,T_L,\qquad
  \Var(\widehat\pi_L(\varphi))\le\bigg(\sum_{l=0}^L\frac{\tilde s_l}{\sqrt{N_l}}\bigg)^2,
\]
with $\tilde s_0=\Var_{\pi_0}(\varphi)^{1/2}$ and
$\tilde s_l^{\,2}=2\|\varphi\|_{\Lip}^2\big\{\kappa W_2^2(\pi_{l-1},\pi_l)+\omega_l+4d\big(T_{l-1}+(\mathcal E_l/2)^{1/2}\big)+\varsigma_l\big\}$.
Moreover, if $\tau_l^{n_l}\le\bar\tau<1$ and $\mathcal E_l\le\bar{\mathcal E}$ for all $l$, then
$T_L\le\bar\tau(1-\bar\tau)^{-1}(\bar{\mathcal E}/2)^{1/2}$ for every $L$.
\end{cor}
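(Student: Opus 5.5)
The proof has three parts: a TV recursion for $T_l$, a bias bound from TV on a bounded set, and a variance bound via Theorem~\ref{thm:mse} with perturbed level variances.

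\textbf{Recursion for $T_l$.} The particle law before the Metropolis--Hastings step is $\widetilde\pi_l=\pi^\theta_{l-1}K_{\theta_l}$, and after it $\pi^\theta_l=\widetilde\pi_lM_l^{n_l}$. By \ref{ass:A5} and $\pi_lM_l=\pi_l$, the Dobrushin contraction gives $T_l\le\tau_l^{n_l}\|\widetilde\pi_l-\pi_l\|_{\mathrm{TV}}$. Since \ref{ass:A3} gives $\pi_l=\pi_{l-1}K^\star_l$, the triangle inequality yields
\[
\|\widetilde\pi_l-\pi_l\|_{\mathrm{TV}}\le\|\pi^\theta_{l-1}K_{\theta_l}-\pi_{l-1}K_{\theta_l}\|_{\mathrm{TV}}+\|\pi_{l-1}K_{\theta_l}-\pi_{l-1}K^\star_l\|_{\mathrm{TV}} .
\]
The first term is at most $T_{l-1}$, because a Markov kernel contracts TV. The second is at most $\|\pi_{l-1}\otimes K_{\theta_l}-\xi^\star_l\|_{\mathrm{TV}}$ by projection onto the second marginal, and Pinsker bounds that by $(\mathcal E_l/2)^{1/2}$. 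Assumption \ref{ass:A4} makes $\theta_l$ deterministic given the fit, so these laws are well defined conditionally. The uniform bound then follows by iterating $T_l\le\bar\tau(T_{l-1}+(\bar{\mathcal E}/2)^{1/2})$ and summing the geometric series $\sum_{j\ge1}\bar\tau^j$.

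\textbf{Bias.} We have $\E\widehat\pi_L(\varphi)=\pi^\theta_L(\varphi)$. The quantity $\pi^\theta_L(\varphi)-\pi_L(\varphi)$ is unchanged by subtracting a constant, so take $\varphi-\varphi(x_0)$ with $x_0=0$. On $\mathsf X=[-1,1]^d$ this function is bounded by $\|\varphi\|_{\Lip}d^{1/2}$. The bound $|\mu(f)-\nu(f)|\le2\|f\|_\infty\|\mu-\nu\|_{\mathrm{TV}}$ then gives $2d^{1/2}\|\varphi\|_{\Lip}T_L$.

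\textbf{Variance.} The lineages are still i.i.d., so the covariance decomposition and the Cauchy--Schwarz step of Theorem~\ref{thm:mse} apply verbatim. It therefore suffices to show $V_l\le\E|\varphi(X_l)-\varphi(X_{l-1})|^2\le\tilde s_l^2$. Bound this by $\|\varphi\|_{\Lip}^2\E\|X_l-X_{l-1}\|^2$, and split $X_l-X_{l-1}=(X_l-\widetilde X_l)+(\widetilde X_l-X_{l-1})$ using $\|a+b\|^2\le2\|a\|^2+2\|b\|^2$. This produces the factor $2$ and the term $\varsigma_l$. It remains to bound $\E\|\widetilde X_l-X_{l-1}\|^2$, where the pair has law $\pi^\theta_{l-1}\otimes K_{\theta_l}$.

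\textbf{Main obstacle.} The hardest step is comparing this law to $\xi^\star_l=\pi_{l-1}\otimes K^\star_l$. The function $g(x,y)=\|x-y\|^2$ is bounded by $\mathrm{diam}(\mathsf X)^2=4d$. Hence
\[
\E_{\pi^\theta_{l-1}\otimes K_{\theta_l}}g\le\E_{\xi^\star_l}g+4d\,\|\pi^\theta_{l-1}\otimes K_{\theta_l}-\xi^\star_l\|_{\mathrm{TV}} ,
\]
using $|\mu(g)-\nu(g)|\le\|g\|_\infty\|\mu-\nu\|_{\mathrm{TV}}$ for $g\ge0$. The TV distance is at most $T_{l-1}+(\mathcal E_l/2)^{1/2}$, by the same two-step triangle inequality at the level of joint laws: first change the first marginal from $\pi^\theta_{l-1}$ to $\pi_{l-1}$, then apply Pinsker. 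Finally, \ref{ass:A3} bounds $\E_{\xi^\star_l}g$ by $\kappa W_2^2(\pi_{l-1},\pi_l)+\omega_l$.

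Care is needed that the pointwise sup bound gives the stated constant $4d$ without an extra factor $2$. If it does not, I would adjust via the bound $\sup g-\inf g\le 4d$ with $g\ge0$.
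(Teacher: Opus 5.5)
Your proposal is correct and follows the paper's proof essentially step for step. It uses the same triangle inequality with Pinsker and Dobrushin contraction for $T_l$, the same bounded-function total-variation bound for the bias, and the same factor-$2$ splitting with the $4d$ comparison to $\xi^\star_l$ for $V_l$. Your closing hedge is unnecessary: for $0\le g\le 4d$ one has $|\mu(g)-\nu(g)|\le 4d\,\|\mu-\nu\|_{\mathrm{TV}}$ with $\|\cdot\|_{\mathrm{TV}}=\sup_A|\mu(A)-\nu(A)|$, which gives exactly the paper's constant.
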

The price of the contraction is the displacement $\varsigma_l$, which loosens the pairing and enters the level variances: more
steps reduce the bias through $\tau_l^{n_l}$ but increase $\varsigma_l$. This is the trade-off observed in
Section~\ref{sec:num-results}, where at fine levels, with the weights nearly uniform, the three sweeps multiply the
variance of the paired differences by about four.

\section{Numerical Experiments}\label{sec:numerics}
Section~\ref{sec:analysis} covers exact couplings and, through crude bounds, the fitted kernels with and without the
Metropolis--Hastings correction.
We now assess them numerically, comparing TransportMH with the multilevel sequential Monte Carlo (MLSMC) sampler
of~\cite{mlsmc} on three Bayesian inverse problems: the one-dimensional elliptic problem of~\cite{mlsmc}, with a smaller observation noise than in that
paper; a two-dimensional Darcy-flow problem; and, in the spirit of~\cite{kbj}, the recovery of the initial vorticity of
a two-dimensional Navier--Stokes flow from noisy point observations at later times. The first two have finite-element
forward maps; the third is nonlinear and time dependent. In all three the first transition is hard and the later ones
are easy: the ESS fraction of $\gamma_1/\gamma_0$ on a $\pi_0$ cloud is 3\%, 1.2\% and 1.8\%, whereas at the second
transition it is already about 60\% and 73\% in test cases 2 and 3. All computations are in Python (NumPy/SciPy for the
PDE solves, PyTorch for the LightSB fits) on a single CPU thread of an Apple M4 Pro laptop. Implementation details,
the sample sizes, the computation of the reference values and a breakdown of the computing time are in
Appendix~\ref{app:impl}.

\subsection{Common setup}\label{sec:num-setup}

\paragraph{Targets.} For an unknown parameter $x$ with prior $\pi_{\mathrm{pr}}$, data $y\in\RR^{d_y}$ and a forward
map $\mathcal G_l$ computed on a mesh of size $h_l$ (finite elements in test cases 1 and 2, finite differences in space
and time in test case 3), level $l$ has unnormalized density
\[
  \gamma_l(x)=\exp\Big(-\frac1{2\sigma_\eta^2}\big\|y-\mathcal G_l(x)\big\|_2^2\Big)\,\pi_{\mathrm{pr}}(x),
  \qquad \pi_l(d x)=\frac{\gamma_l(x)d x}{Z_l},
\]
and the incremental weights are $\gamma_l/\gamma_{l-1}$, the ratio of the level-$l$ and level-$(l-1)$ likelihoods.
All samplers work in unconstrained coordinates $z\in\RR^d$, a bijective reparametrization of $x$ defined for each
problem below, and $\pi_l$ also denotes the target in these coordinates. The quantity of interest is $\pi_L(\varphi)$ for an observable $\varphi$ given for each problem.

\paragraph{Methods.} All four methods start from the same $N_0$ samples of $\pi_0$ and use the same $N_0\ge\dots\ge N_L$.
\begin{enumerate}[label=(\roman*)]
\item \emph{MLSMC-pCN}: the sampler of~\cite{mlsmc}: reweight by $\gamma_l/\gamma_{l-1}$, resample to $N_l$ particles,
  and mutate with 3 sweeps of the pCN-MH kernel of Section~\ref{sec:tmh}, with $(m,AA^\top)$ the mean and covariance of
  the current cloud and $\rho=0.78$.
\item \emph{MLSMC-pCN + ESS sweeps}: as (i), with $n_l$ sweeps chosen by the rule of Section~\ref{sec:tmh}, so that
  (ii) and (iv) differ only by the transport.
\item \emph{LightSB}: Algorithm~\ref{alg:lsb}, without correction.
\item \emph{TransportMH}: Algorithm~\ref{alg:tmh}, with the pCN-MH kernel and the adaptive $n_l$ and $\rho_l$ of
  Section~\ref{sec:tmh}.
\end{enumerate}
The mutations of (i) and (ii) and the MH step of (iv) leave $\pi_l$ invariant; (iii) and (iv) use the paired estimator
\eqref{eq:estimator}. TransportMH evaluates $\gamma_l$ at the transported particles and runs its MH sweeps at
every level, including $L$, where MLSMC does not mutate; at equal $L$ it is therefore the most expensive method
(Appendix~\ref{app:solves}), and it can only win at equal accuracy.

\paragraph{LightSB fits.} The potentials $v_\theta$ have four components with diagonal covariances and are fitted with
Adam on the whitened level-$(l-1)$ cloud (Appendix~\ref{app:fits}). The entropic parameter decreases geometrically,
$\varepsilon_l=0.05\cdot4^{-(l-1)}$, floored at $10^{-5}$, following the shrinking distance between consecutive levels.

\paragraph{Sample sizes.} We choose the sample sizes as Corollary~\ref{cor:rates} advises, as
in~\cite{mlsmc}:
\[
  N_l\propto\delta^{-2}\left(\sum_{k\le L}h_k^{(\beta-\zeta)/2}\right)h_l^{(\beta+\zeta)/2}
\]
for a tolerance $\delta$, floored at $N_l\ge100$ so that the cloud covariance up to $d=50$ is well defined. With the rates of
Table~\ref{tab:settings}, $\beta>\zeta$ in all three problems, so this allocation targets a mean square error $\delta^2$ at
cost $O(\delta^{-2})$, the regime of the cost statement of Section~\ref{sec:elliptic-theory}; increasing the horizon $L$
by one halves $\delta$, as for $\alpha=1$, which multiplies $N_0$ by about four. Table~\ref{tab:settings}
gives the rates used and Appendix~\ref{app:settings} the resulting $N_l$.

\paragraph{Error and cost.} Each configuration is run $R=30$ times independently. With reference value $\bar\pi$ and
estimates $\widehat \pi_{L,r}(\varphi)$,
\[
  \mathrm{relRMSE}=\frac{1}{|\bar\pi|}\Big\{\frac1R\sum_{r=1}^R\big(\widehat \pi_{L,r}(\varphi)-\bar\pi\big)^2\Big\}^{1/2},\qquad
  \mathrm{relbias}=\frac{1}{|\bar\pi|}\Big(\frac1R\sum_{r=1}^R\widehat \pi_{L,r}(\varphi)-\bar\pi\Big),
\]
and the standard error of the relative bias combines the spread of the runs and the standard error of $\bar\pi$. Cost
is the mean wall time per run, including LightSB fitting and all PDE solves and excluding the generation of the
level-0 samples, which is shared by all methods. Varying the horizon $L$ traces an error--cost curve per method.

\subsection{Test case 1: one-dimensional elliptic problem}\label{sec:num-1d}

This is the example of~\cite{mlsmc}, in our own implementation.
\paragraph{Model.} On $\mathcal D=(0,1)$,
\[
  -\frac{d}{d s}\Big(k(s;x)\frac{d p}{d s}(s;x)\Big)=100\,s,\qquad p(0;x)=p(1;x)=0,
\]
with
\[
  k(s;x)=0.15+\sum_{j=1}^{50}x_j\,\sigma_j\,b_j(s),\qquad \sigma_j=0.4\cdot4^{-j},\qquad
  b_j(s)=\begin{cases}\sin(j\pi s),& j\text{ odd},\\ \cos(j\pi s),& j\text{ even},\end{cases}
\]
and prior $x\sim\mathcal U([-1,1]^{50})$, so that $k\ge0.15-\sum_j\sigma_j>0$. The samplers work in
$z=\operatorname{atanh}(x)\in\RR^{50}$, where the prior density is $\prod_j\operatorname{sech}^2z_j$.

\paragraph{Discretization and data.} Level $l$ uses continuous piecewise-linear finite elements on $2^{l+3}$ uniform
elements, $h_l=2^{-(l+3)}$; one solve is a tridiagonal system, of cost $O(h_l^{-1})$. The data are
$y=(p(\frac14;x^\dagger),p(\frac34;x^\dagger))+\eta$, $\eta\sim\Na(0,\sigma_\eta^2I_2)$, with $x^\dagger\sim\mathcal U([-1,1]^{50})$
(fixed seed), computed on $2^{20}$ elements, and $\sigma_\eta=0.05$ (\cite{mlsmc} use $0.25$; the smaller noise makes
consecutive levels further apart). The observable is $\varphi(x)=p(\frac12;x)$, computed from the exact solution
of the continuum problem, the same at every level.

\subsection{Test case 2: two-dimensional Darcy flow}\label{sec:num-darcy}

\paragraph{Model.} On $\mathcal D=(0,1)^2$,
\[
  -\nabla\cdot\big(k(s;x)\nabla p(s;x)\big)=0,\qquad p=1\ \text{on }s_1=0,\quad p=0\ \text{on }s_1=1,\quad
  k\nabla p\cdot n=0\ \text{on }s_2\in\{0,1\},
\]
with log-permeability
\[
  \log k(s;x)=\sum_{j=1}^{25}x_j\,\lambda_j^{1/2}e_j(s),\qquad x\sim\Na(0,I_{25}),
\]
where the $e_j$ are the $L^2$-normalized cosine modes $c_mc_n\cos(m\pi s_1)\cos(n\pi s_2)$, $(m,n)\neq(0,0)$, with
eigenvalues $\lambda\propto\{\pi^2(m^2+n^2)+9\}^{-2}$, the 25 largest modes are kept, and the $\lambda_j$ are scaled so that the
average pointwise variance of $\log k$ is 1. The constant mode is excluded because $p$ is invariant under
$k\mapsto ck$. The samplers work in $z=x\in\RR^{25}$.

\paragraph{Discretization and data.} Level $l$ uses P1 finite elements on a uniform mesh of $(4\cdot2^l)\times(4\cdot2^l)$
squares, each cut into two triangles. Pressure is observed at the $5\times5$ points $(i/6,j/6)$, $i,j=1,\dots,5$, with
$\sigma_\eta=0.025$, from $x^\dagger\sim\Na(0,I_{25})$ (fixed seed) on level 6; sensor values converge at rate $h_l^2$.
The observable is the mean permeability over the center of the domain, $\varphi(x)=4\int_{[1/4,3/4]^2}k(s;x)\,ds$, which
needs no PDE solve. It is not globally Lipschitz in $x$, so Section~\ref{sec:analysis} does not cover it.

\subsection{Test case 3: two-dimensional Navier--Stokes}\label{sec:num-ns}
\paragraph{Model.} On the unit torus $\mathbb T^2=[0,1)^2$, the vorticity $w(s,t)$ of an incompressible flow satisfies
\[
  \partial_tw+u\cdot\nabla w=\nu\Delta w+f,\qquad u=(\partial_2\Psi,-\partial_1\Psi),\qquad -\Delta\Psi=w,
  \qquad t\in(0,1],
\]
with viscosity $\nu=0.01$ and fixed forcing $f(s)=\frac12\{\sin2\pi(s_1+s_2)+\cos2\pi(s_1+s_2)\}$. The unknown is the
initial vorticity
\[
  w(s,0;x)=\sum_{k=1}^{36}x_k\,\lambda_k^{1/2}e_k(s),\qquad x\sim\Na(0,I_{36}),
\]
where the $e_k$ are the $L^2$-normalized real Fourier modes $\sqrt2\cos(2\pi q\cdot s)$, $\sqrt2\sin(2\pi q\cdot s)$
for the 18 wavevectors $q\in\mathbb Z^2\setminus\{0\}$ of smallest $|q|$ (one from each pair $\pm q$),
$\lambda_k\propto(4\pi^2|q|^2+9)^{-2}$, and the $\lambda_k$ are scaled so that the average pointwise prior standard
deviation of $w(\cdot,0)$ is 2. The samplers work in $z=x\in\RR^{36}$.

\paragraph{Discretization and data.} Level $l$ uses an $(8\cdot2^l)\times(8\cdot2^l)$ grid with second-order central
differences in space and the classical fourth-order Runge--Kutta method with an integrating factor for the viscous term,
with $8\cdot2^l$ time steps. Sensor values converge at rate $h_l^2$ and the cost of a solve grows like $h_l^{-3}$ up to
a logarithm. We use finite differences rather than a pseudo-spectral discretization on purpose: for this smooth flow
the spectral solution converges exponentially and the level corrections vanish once the grid resolves the flow, which
leaves nothing for a multilevel method to do. The vorticity is observed at the $4\times4$ points $(i/4,j/4)$,
$i,j=0,\dots,3$, at times $t=0.5$ and $t=1$, giving 32 observations with $\sigma_\eta=0.25$, from
$x^\dagger\sim\Na(0,I_{36})$ (fixed seed) on level 5. The observable is the mean initial vorticity over a quarter of the
domain, $\varphi(x)=4\int_{[0,1/2]^2}w(s,0;x)\,ds$, which needs no PDE solve and is linear in $x$, hence Lipschitz.

\begin{table}[!htbp]
\centering\footnotesize
\begin{tabular}{lccccll}
\toprule
problem & $d$ & $(\alpha,\beta,\zeta)$ & horizons & $N_0$ at finest $L$ & target & reference (SE)\\
\midrule
1D elliptic & 50 & $(1,2,1)$ & $L=1,\dots,9$ & $10^5$ & $\pi(\varphi)$ & $50.33335\ (0.00010)$\\
Darcy flow & 25 & $(\cdot,4,2.5)$ & $L=1,\dots,6$ & $8\times10^4$ & $\pi_5(\varphi)$ & $0.99780\ (0.00112)$\\
Navier--Stokes & 36 & $(\cdot,4,3)$ & $L=1,\dots,4$ & $2\times10^4$ & $\pi_3(\varphi)$ & $0.7981\ (0.0029)$\\
\bottomrule
\end{tabular}
\caption{Settings of the three test cases: dimension, rates used for the sample sizes, horizons, level-0 sample size at
the finest horizon, target of the scored estimates and reference value with its standard error. For the Darcy-flow and Navier--Stokes problems
the finer levels change the target by much less than the reference standard error. Appendix~\ref{app:settings} gives the
full allocations and how the reference values were computed.}\label{tab:settings}
\end{table}

\subsection{Results}\label{sec:num-results}

\paragraph{Error against cost.} Figure~\ref{fig:err-time} and Tables~\ref{tab:results}--\ref{tab:time-to-target} give
the main comparison. In test cases 1 and 2 the MLSMC variants are cheaper at loose accuracy, where the LightSB fit is a
fixed overhead, and TransportMH is faster at tight accuracy: it reaches a relative RMSE of $3\times10^{-4}$ in 3.5\,s
against 5.9\,s in test case 1, and $10^{-2}$ in 13\,s against 40\,s in test case 2, and it is the only method to reach
the smallest targets. In test case 3 it is faster at every accuracy ($2\times10^{-2}$ in 5.0\,s against 59\,s), and at
$L=4$ its mean squared error is 4.5 times smaller than that of MLSMC-pCN with ESS sweeps, for 1.8 times the wall time;
plain MLSMC-pCN hardly improves beyond $L=2$. LightSB without correction stalls at a relative bias of order
$10^{-3}$--$10^{-2}$, several standard errors from zero: the transport error accumulates over levels, and the MH step
is what removes it.

\begin{figure}[htbp]
\centering
\includegraphics[width=\textwidth]{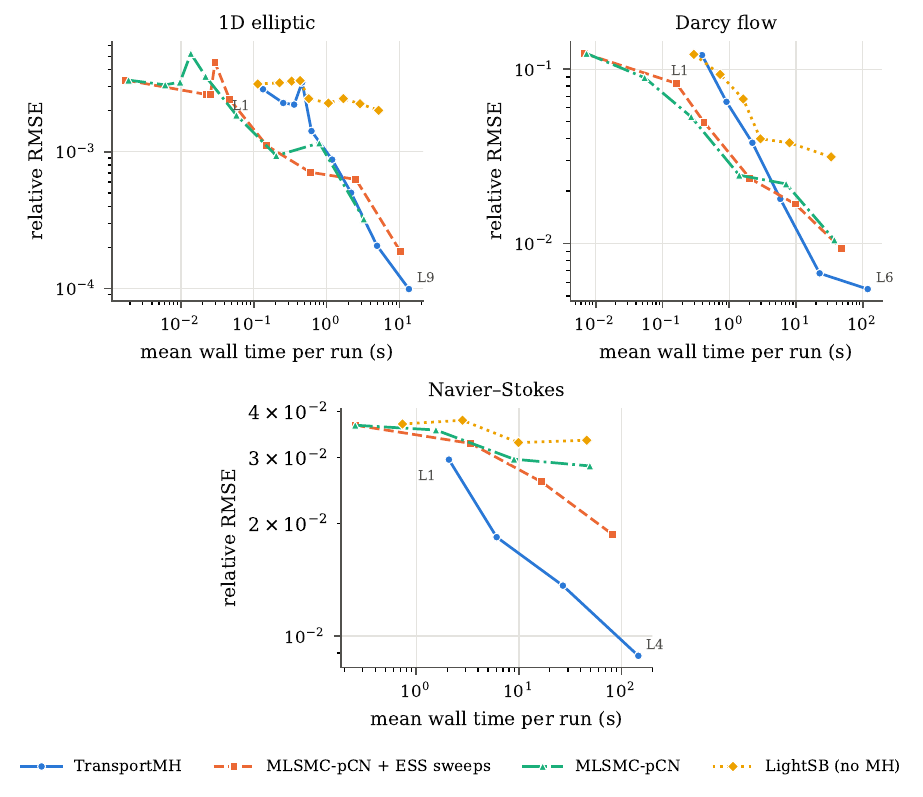}
\caption{Relative RMSE against mean wall time per run, over horizons $L$ (30 runs per point): test case 1,
$L=1,\dots,9$; test case 2, $L=1,\dots,6$; test case 3, $L=1,\dots,4$. Labels on the TransportMH curve give the
smallest and largest $L$.}\label{fig:err-time}
\end{figure}

\begin{figure}[htbp]
\centering
\includegraphics[width=\textwidth]{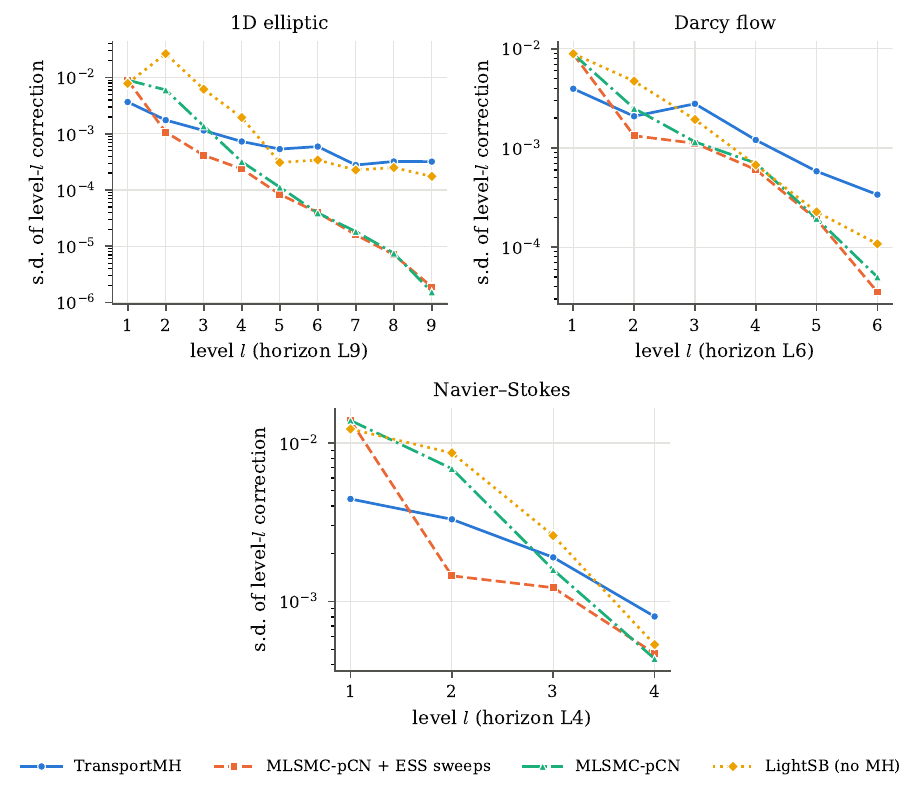}
\caption{Standard deviation over the 30 runs of the level-$l$ correction at the finest horizon. For the paired
methods the level-$l$ correction is $N_l^{-1}\sum_{i\le N_l}\{\varphi(X_l^i)-\varphi(X_{l-1}^i)\}$; for MLSMC it is the
$l$th increment of~\cite{mlsmc}. Up to covariances between levels, which are small here, the variance of the estimator is the sum of the squared values plotted; TransportMH is lowest at level 1, which dominates for all methods.}\label{fig:level-sd}
\end{figure}

\begin{table}[!htbp]
\centering\footnotesize
\setlength{\tabcolsep}{3pt}
\begin{tabular}{l crrr crrr crrr}
\toprule
& \multicolumn{4}{c}{1D elliptic ($\times10^{-4}$)} & \multicolumn{4}{c}{Darcy flow ($\times10^{-3}$)} & \multicolumn{4}{c}{Navier--Stokes ($\times10^{-3}$)}\\
\cmidrule(lr){2-5}\cmidrule(lr){6-9}\cmidrule(lr){10-13}
method & $L$ & RMSE & bias (SE) & time & $L$ & RMSE & bias (SE) & time & $L$ & RMSE & bias (SE) & time\\
\midrule
MLSMC-pCN & 7 & 9.4 & $-1.5\,(1.7)$ & 0.2 & 4 & 25 & $-8.6\,(4.4)$ & 1.4 & 2 & 36 & $+7.8\,(7.4)$ & 1.6\\
 & 8 & 12 & $+2.2\,(2.1)$ & 0.8 & 5 & 22 & $-5.4\,(4.1)$ & 7.1 & 3 & 30 & $-4.5\,(6.6)$ & 9.0\\
 & 9 & 3.2 & $-0.70\,(0.58)$ & 3.2 & 6 & 10 & $+0.1\,(2.2)$ & 37.6 & 4 & 29 & $-10\,(6)$ & 49.1\\
\addlinespace
+ ESS sweeps & 7 & 7.1 & $-3.0\,(1.2)$ & 0.6 & 4 & 24 & $-9.7\,(4.2)$ & 2.0 & 2 & 33 & $+6.7\,(7.0)$ & 3.4\\
 & 8 & 6.3 & $+1.0\,(1.2)$ & 2.5 & 5 & 17 & $-1.8\,(3.3)$ & 9.7 & 3 & 26 & $-5.0\,(5.9)$ & 16.6\\
 & 9 & 1.9 & $-0.54\,(0.33)$ & 10.2 & 6 & 9.3 & $+0.6\,(2.1)$ & 48.6 & 4 & 19 & $-8.3\,(4.8)$ & 81.0\\
\addlinespace
LightSB & 7 & 24 & $-23\,(2)$ & 1.7 & 4 & 40 & $-31\,(5)$ & 2.9 & 2 & 38 & $+7.0\,(7.8)$ & 2.8\\
 & 8 & 22 & $-21\,(1)$ & 2.9 & 5 & 38 & $-28\,(5)$ & 8.0 & 3 & 33 & $-12\,(7)$ & 9.9\\
 & 9 & 20 & $-19\,(1)$ & 5.2 & 6 & 31 & $-29\,(2)$ & 33.6 & 4 & 33 & $-20\,(6)$ & 45.7\\
\addlinespace
TransportMH & 7 & 5.0 & $-3.3\,(0.7)$ & 2.2 & 4 & 18 & $-3.3\,(3.5)$ & 5.8 & 2 & 18 & $-4.2\,(4.9)$ & 6.1\\
 & 8 & 2.1 & $-0.87\,(0.35)$ & 4.9 & 5 & 6.8 & $-1.6\,(1.7)$ & 22.6 & 3 & 14 & $-4.0\,(4.4)$ & 26.8\\
 & 9 & $\mathbf{0.99}$ & $-0.47\,(0.16)$ & 13.5 & 6 & $\mathbf{5.5}$ & $-3.4\,(1.4)$ & 118.9 & 4 & $\mathbf{8.8}$ & $-5.6\,(3.8)$ & 145.7\\
\bottomrule
\end{tabular}
\caption{Relative RMSE and relative bias with its standard error, in the units given in each column header, and mean
wall time per run in seconds, at the three finest horizons $L$; 30 runs each.
``+ ESS sweeps'' is MLSMC-pCN with ESS-adaptive sweeps.}\label{tab:results}
\end{table}

\begin{table}[!htbp]
\centering\footnotesize
\setlength{\tabcolsep}{5pt}
\begin{tabular}{l cccc cccc cccc}
\toprule
& \multicolumn{4}{c}{1D elliptic ($\times10^{-4}$)} & \multicolumn{4}{c}{Darcy flow ($\times10^{-3}$)} & \multicolumn{4}{c}{Navier--Stokes ($\times10^{-3}$)}\\
\cmidrule(lr){2-5}\cmidrule(lr){6-9}\cmidrule(lr){10-13}
relRMSE target & 10 & 3 & 1.5 & 1 & 30 & 20 & 10 & 6.6 & 30 & 20 & 15 & 10\\
\midrule
MLSMC-pCN & 0.2 & -- & -- & -- & 0.9 & 8.8 & -- & -- & 8.3 & -- & -- & --\\
+ ESS sweeps & 0.2 & 5.9 & -- & -- & 1.2 & 4.4 & 40 & -- & 6.1 & 59 & -- & --\\
TransportMH & 1.0 & \textbf{3.5} & \textbf{7.6} & \textbf{13.3} & 3.0 & 5.1 & \textbf{13} & \textbf{27} & \textbf{2.1} & \textbf{5.0} & \textbf{17} & \textbf{90}\\
\bottomrule
\end{tabular}
\caption{Wall time (seconds) to reach a relative RMSE target, given in the units of each column header, by log--log interpolation between consecutive horizons; ``--'':
not reached at the finest horizon. LightSB reaches none of these targets.}\label{tab:time-to-target}
\end{table}

\paragraph{Where the gain comes from.} Figure~\ref{fig:level-sd} shows that the level-1 correction dominates the
variance for all methods, and that TransportMH reduces its variance by factors of about 6, 5 and 10 in test cases 1--3
(standard deviations 0.0037, 0.0040 and 0.0044 against 0.0090, 0.0089 and 0.0139 for MLSMC-pCN). In MLSMC the level-1
increment is an importance-sampling average over the $N_0$ particles at an ESS of a few percent, computed before the
mutation, so extra sweeps cannot reduce it: the level-1 curves of the two MLSMC variants coincide, and the sweeps only
help from level 2 on, where MLSMC-pCN with ESS sweeps has the smaller corrections. In TransportMH the level-1 correction
is a paired difference along the transport, whose variance is controlled by the transport cost rather than by the ESS.

\paragraph{Computing time.} TransportMH and MLSMC-pCN with ESS sweeps make almost the same number of PDE solves, most
of them in the 30 sweeps at the first transition, but the extra solves of TransportMH, at the transported particles and
in the MH sweeps at the final levels, sit where solves are most expensive (Appendix~\ref{app:cost}). The LightSB fits
cost about 3\,s per run whatever the $N_l$: a fifth of the total in test case 1, where solves are cheap, and 2--2.5\% in
test cases 2 and 3. At fine levels, where $e_l\approx1$, the MH sweeps move little (acceptance 94--99\%) but each adds
about as much noise to $X_l^i-X_{l-1}^i$ as the transport, so three sweeps multiply the variance of the paired
differences by about four. Skipping them there would bring the $L=6$ run of test case 2 from 119\,s to roughly
60--65\,s; we have not tested its effect on the bias.

\paragraph{Limitations.} Two features of TransportMH call for comment.
\begin{enumerate}[label=(\alph*)]
\item \emph{Fine-level variances.} The level-$l$ correction standard deviation of TransportMH levels off at about
  $3\times10^{-4}$ in test case 1 (levels 7--9), while those of the MLSMC variants keep decreasing, to $2\times10^{-6}$ at
  level 9, because their fine-level increments are importance-sampling corrections with weights close to one. The
  plateau comes from two implementation choices rather than from the transport: the floor $\varepsilon_l\ge10^{-5}$,
  reached at level 7, stops the transport noise from decreasing (the variance of the paired differences before MH is
  $3.0$, $2.6$ and $2.5\times10^{-6}$ at levels 7--9), and the MH sweeps multiply it by about four; with $N_l=100$,
  $(9.7\times10^{-6}/100)^{1/2}\approx3\times10^{-4}$. In test case 2, where the floor is not reached, the correction
  standard deviation roughly halves per level at fixed $N_l=100$ ($1.2\times10^{-3}$, $5.8\times10^{-4}$ and
  $3.4\times10^{-4}$ at levels 4--6), the decay $V_l\lesssim h_l^\beta$ of Corollary~\ref{cor:rates} with $\beta=2$ from
  $\varepsilon_l\propto h_l^2$. The gain of TransportMH is nevertheless concentrated at the coarse, hard transitions.
\item \emph{Bias at the finest horizon.} TransportMH has a small negative relative bias at the finest horizon:
  $(-4.7\pm1.6)\times10^{-5}$ in test case 1 and $(-3.4\pm1.4)\times10^{-3}$ in test case 2, about 3 and 2.4 standard
  errors, and $(-5.6\pm3.8)\times10^{-3}$ in test case 3, where the standard error is dominated by the reference value.
  This is consistent with an inexact transport followed by finitely many MH sweeps, which reduce but do not remove the
  transport bias.
\end{enumerate}
On problems where all transitions have a high ESS (not shown) the MLSMC variants are faster at every accuracy we
tested: the advantage of the transport is specific to transitions where the incremental weights degenerate.

\section{Discussion}\label{sec:discussion}

Learned entropic transport between consecutive posteriors, corrected by Metropolis--Hastings steps that keep the
pairing, helps most at the coarse transitions where the incremental weights degenerate; at fine levels the
importance-sampling corrections of MLSMC are already accurate. This suggests a hybrid scheme, with transport only where
the effective sample size is low and fewer or no fine-level sweeps, together with an adaptive choice of $\varepsilon_l$
and of the number of mixture components. On the theoretical side, the analysis covers exact couplings and bounds the effect of the fitted kernels only crudely; a sharper
treatment requires bounds on the error of the stochastic gradient fit, on the approximation of the oracle potential by
Gaussian mixtures, and on the contraction of the preconditioned Crank--Nicolson kernel used in practice. Finally, higher-dimensional parameters, such as discretized random fields,
would call for more structured potentials, for instance with low-rank covariances.

\appendix
\section{Proofs for Section~\ref{sec:analysis}}\label{app:proofs}

\begin{proof}[Proof of Corollary~\ref{cor:rates}]
The proof has two steps: the first bound of Theorem~\ref{thm:mse} is summed with the assumed rate of the
level variances, and the result is fed to the standard multilevel complexity argument.

(i) \emph{Bias and variance.} Under \ref{ass:A1}, $\E\widehat\pi_L(\varphi)=\pi_L(\varphi)$ and
the mean square error is the squared discretization bias plus the variance:
\[
  \E\big[\{\widehat \pi_L(\varphi)-\pi(\varphi)\}^2\big]=\{\pi_L(\varphi)-\pi(\varphi)\}^2+\Var\big(\widehat\pi_L(\varphi)\big)
  \le c_1^2h_L^{2\alpha}+\Var\big(\widehat\pi_L(\varphi)\big).
\]

(ii) \emph{Summing the covariances.} The proof of Theorem~\ref{thm:mse} gives
$\Var(\widehat\pi_L(\varphi))\le\sum_lV_l/N_l+2\sum_{k<l}\sqrt{V_kV_l}/N_k$. Grouping the second sum by $k$,
\[
  \Var\big(\widehat\pi_L(\varphi)\big)\le\sum_{l=0}^L\frac{V_l}{N_l}
  +2\sum_{k=0}^{L-1}\frac{\sqrt{V_k}}{N_k}\sum_{l=k+1}^L\sqrt{V_l}.
\]
Since $h_l=h_k2^{-(l-k)}$, the inner sum is a geometric series: by assumption,
\[
  \sum_{l=k+1}^L\sqrt{V_l}\le\sqrt{c_2}\,h_k^{\beta/2}\sum_{m\ge1}2^{-m\beta/2}=c_\beta\sqrt{c_2}\,h_k^{\beta/2}.
\]
For $k=0$ this contributes $2c_\beta\sqrt{V_0c_2}\,h_0^{\beta/2}/N_0$. For $k\ge1$, the assumption also gives
$\sqrt{V_k}\le\sqrt{c_2}\,h_k^{\beta/2}$, so the contribution is at most $2c_\beta c_2h_k^\beta/N_k$; adding
$V_k/N_k\le c_2h_k^\beta/N_k$ gives $(1+2c_\beta)c_2h_k^\beta/N_k$. Together with (i) this is the displayed bound.

(iii) \emph{Complexity.} The displayed bound has the form
\[
  \E\big[\{\widehat \pi_L(\varphi)-\pi(\varphi)\}^2\big]\le c_1^2h_L^{2\alpha}+\sum_{l=0}^L\frac{v_l}{N_l},
  \qquad v_l\le c_2'h_l^\beta,
\]
with $c_2'=\max\{(1+2c_\beta)c_2,\ h_0^{-\beta}(V_0+2c_\beta\sqrt{V_0c_2}\,h_0^{\beta/2})\}$. This is exactly what the proof of the
multilevel complexity theorem of \cite{cgst} (Theorem~1; see \cite{giles} for the original result) uses: there,
independence of the level corrections serves only to obtain a variance of the form $\sum_lv_l/N_l$, and the remaining
inputs are the bias bound and the cost bound, both assumed here. That proof takes $L$ as the smallest level with
$c_1^2h_L^{2\alpha}\le\delta^2/2$ and $N_l=\lceil2\,\delta^{-2}c_2'S_L\,h_l^{(\beta+\zeta)/2}\rceil$ with
$S_L=\sum_{k=0}^Lh_k^{(\beta-\zeta)/2}$, so that the variance is at most $\delta^2/2$, and the cost
$c_3\sum_lN_lh_l^{-\zeta}$ then gives the three regimes. These $N_l$ are non-increasing in $l$, as Theorem~\ref{thm:mse}
requires, because $h_l$ decreases.
\end{proof}

\begin{proof}[Proof of Lemma~\ref{lem:elliptic}]
We say that a probability measure $\mu$ on $\RR^d$ satisfies a logarithmic Sobolev inequality with constant $C$ if
$\mathrm{Ent}_\mu(f^2)\le2C\int\|\nabla f\|_2^2\,d\mu$ for all smooth $f$, where
$\mathrm{Ent}_\mu(g)=\mu(g\log g)-\mu(g)\log\mu(g)$.

\emph{Density ratio.} Write $\Delta_l=\Phi_l-\Phi_{l-1}$. Since
$\Phi_l-\Phi_{l-1}=\langle\mathcal G_{l-1}-\mathcal G_l,\,2y-\mathcal G_l-\mathcal G_{l-1}\rangle/(2\sigma_\eta^2)$,
\eqref{eq:fe} and the Cauchy--Schwarz inequality give $\sup_\mathsf{X}|\Delta_l|\le c_\Phi h_l$. The density of $\pi_l$ with respect
to $\pi_{l-1}$ is $g_l=e^{-\Delta_l}/\pi_{l-1}(e^{-\Delta_l})$, a ratio of two quantities in
$[e^{-c_\Phi h_l},e^{c_\Phi h_l}]$, so $g_l\in[e^{-2c_\Phi h_l},e^{2c_\Phi h_l}]$ and $|g_l-1|\le e^{2c_\Phi h_l}-1$. Hence
\[
  \mathrm{KL}(\pi_l\|\pi_{l-1})\le\chi^2(\pi_l\|\pi_{l-1})=\int(g_l-1)^2\,d\pi_{l-1}\le\big(e^{2c_\Phi h_l}-1\big)^2,
\]
using $\mathrm{KL}\le\log(1+\chi^2)\le\chi^2$.

\emph{Transport inequality.} The uniform law on $[-1,1]$ satisfies a logarithmic Sobolev inequality with some constant
$C_0$~\cite{bgl}, and such inequalities tensorize, so $\pi_{\mathrm{pr}}$ satisfies one with the same constant, whatever $d$. The
measure $\pi_{l-1}$ has density proportional to $e^{-\Phi_{l-1}}$ with respect to $\pi_{\mathrm{pr}}$, and $0\le\Phi_{l-1}\le\Phi^*$ on
$\mathsf{X}$ by~\eqref{eq:fe}. By the perturbation lemma of~\cite{hs}, $\pi_{l-1}$ satisfies a logarithmic Sobolev inequality with
constant $C_0e^{\Phi^*}$, and by~\cite{ov} it then satisfies the transport inequality
$W_2^2(\nu,\pi_{l-1})\le2C_0e^{\Phi^*}\mathrm{KL}(\nu\|\pi_{l-1})$ for every probability measure $\nu$. With $\nu=\pi_l$ and
the bound on the Kullback--Leibler divergence, this gives the first bound. The second follows in the same way, with
$\Phi-\Phi_l$ in place of $\Delta_l$, the last bound in~\eqref{eq:fe}, and the transport inequality for $\pi$.
\end{proof}
\begin{proof}[Proof of Corollary~\ref{cor:elliptic}]
\ref{ass:A1} holds because $\xi_l\in\Pi(\pi_{l-1},\pi_l)$. As in the proof of Theorem~\ref{thm:mse},
\[
  V_l\le\|\varphi\|_{\Lip}^2\E_{\xi_l}\|X-Y\|_2^2\le\|\varphi\|_{\Lip}^2\{\kappa W_2^2(\pi_{l-1},\pi_l)+\omega_l\},
\]
and by Lemma~\ref{lem:elliptic} and $e^{2x}-1\le2xe^{2x}$, $W_2^2(\pi_{l-1},\pi_l)\le8C_0c_\Phi^2e^{\Phi^*+4c_\Phi h_0}h_l^2$.
For the bias,
\[
  |\pi_L(\varphi)-\pi(\varphi)|\le\|\varphi\|_{\Lip}W_1(\pi_L,\pi)\le\|\varphi\|_{\Lip}W_2(\pi_L,\pi),
\]
with
$W_1$ defined as $W_2$ with the cost $\|X-Y\|_2$, and Lemma~\ref{lem:elliptic} applies.
\end{proof}
\begin{proof}[Proof of Lemma~\ref{lem:eotexcess}]
\emph{Reformulation.} For $\xi\in\Pi(\mu,\nu)$, $H(\xi)=H(\mu)+H(\nu)-\mathrm{KL}(\xi\|\mu\otimes\nu)$, so $\xi_\varepsilon$ also minimizes
$\frac12\E_\xi\|X-Y\|_2^2+\varepsilon\,\mathrm{KL}(\xi\|\mu\otimes\nu)$ over $\Pi(\mu,\nu)$. Comparing with any $\tilde\xi\in\Pi(\mu,\nu)$ and
dropping $\mathrm{KL}(\xi_\varepsilon\|\mu\otimes\nu)\ge0$,
\[
  \E_{\xi_\varepsilon}\|X-Y\|_2^2\le\E_{\tilde\xi}\|X-Y\|_2^2+2\varepsilon\,\mathrm{KL}(\tilde\xi\|\mu\otimes\nu).
\]

\emph{Competitor.} Let $N_\varepsilon=\lceil2/\varepsilon\rceil$ and partition $\mathsf{X}$ into $N_\varepsilon^d$ half-open cubes $Q$ of side $r=2/N_\varepsilon\le\varepsilon$,
of diameter $r\sqrt d$. Let $\xi_0$ be an optimal coupling for $W_2(\mu,\nu)$ and $\bar\xi_{QQ'}=\xi_0(Q\times Q')$. Draw
$(X_0,Y_0)\sim\xi_0$ and, given that $(X_0,Y_0)\in Q\times Q'$, draw $X$ and $Y$ independently from $\mu$ restricted to $Q$
and $\nu$ restricted to $Q'$, normalized. The law $\tilde\xi$ of $(X,Y)$ has density $\bar\xi_{QQ'}/\{\mu(Q)\nu(Q')\}$ with respect to
$\mu\otimes\nu$ on $Q\times Q'$, and marginals $\sum_Q\mu(Q)\,\mu(\cdot\mid Q)=\mu$ and $\nu$, so $\tilde\xi\in\Pi(\mu,\nu)$.

\emph{Cost.} Since $\|X-X_0\|_2$ and $\|Y-Y_0\|_2$ are at most $r\sqrt d$, and $\E\|X_0-Y_0\|_2\le W_2(\mu,\nu)\le2\sqrt d$,
\[
  \E_{\tilde\xi}\|X-Y\|_2^2\le\E\big(\|X_0-Y_0\|_2+2r\sqrt d\big)^2\le W_2^2(\mu,\nu)+8dr+4dr^2\le W_2^2(\mu,\nu)+8d\varepsilon+4d\varepsilon^2 .
\]

\emph{Entropy.} $\mathrm{KL}(\tilde\xi\|\mu\otimes\nu)=\sum_{QQ'}\bar\xi_{QQ'}\log[\bar\xi_{QQ'}/\{\mu(Q)\nu(Q')\}]$ is the mutual information of
the cells of $X_0$ and $Y_0$, which is at most the entropy of the cell of $X_0$, hence at most
$\log N_\varepsilon^d\le d\log(1+2/\varepsilon)$. Combining the three displays gives the bound.
\end{proof}

\begin{proof}[Proof of Corollary~\ref{cor:fitted}]
Write $\pi^\theta_l$ for the law of the particles at level $l$ and $\xi^\theta_l=\pi^\theta_{l-1}\otimes K_{\theta_l}$ for the law of
$(X^i_{l-1},X^i_l)$; we condition on the fitted parameters, which are independent of the particles by \ref{ass:A4}.

\emph{Accumulation.} By the chain rule for the Kullback--Leibler divergence,
\[
  \mathrm{KL}(\xi^\star_l\|\xi^\theta_l)=\mathrm{KL}(\pi_{l-1}\|\pi^\theta_{l-1})+\mathcal E_l,
\]
and since $\pi_l$ and $\pi^\theta_l$ are the
second marginals of $\xi^\star_l$ and $\xi^\theta_l$, $\mathrm{KL}(\pi_l\|\pi^\theta_l)\le\mathrm{KL}(\xi^\star_l\|\xi^\theta_l)$. With
$\pi^\theta_0=\pi_0$, induction gives $\mathrm{KL}(\pi_l\|\pi^\theta_l)\le\Delta_l$ and $\mathrm{KL}(\xi^\star_l\|\xi^\theta_l)\le\Delta_l$,
and by Pinsker's inequality $\|\pi_l-\pi^\theta_l\|_{\mathrm{TV}}$ and $\|\xi^\star_l-\xi^\theta_l\|_{\mathrm{TV}}$ are at most
$(\Delta_l/2)^{1/2}$.

\emph{Variance.} The lineages are independent and identically distributed whatever the kernels, so the variance formula
of Section~\ref{sec:analysis} and the argument of Theorem~\ref{thm:mse} give
$\Var(\widehat\pi_L(\varphi))\le(\sum_l\tilde s_l/\sqrt{N_l})^2$ as soon as $V_l\le\tilde s_l^{\,2}$. For $l\ge1$,
$V_l\le\|\varphi\|_{\Lip}^2\E_{\xi^\theta_l}\|X-Y\|_2^2$, and since $0\le\|x-y\|_2^2\le4d$ on $\mathsf{X}\times \mathsf{X}$,
\[
  \E_{\xi^\theta_l}\|X-Y\|_2^2\le\E_{\xi^\star_l}\|X-Y\|_2^2+4d\|\xi^\star_l-\xi^\theta_l\|_{\mathrm{TV}}
  \le\kappa W_2^2(\pi_{l-1},\pi_l)+\omega_l+2d(2\Delta_l)^{1/2}.
\]

\emph{Bias.} $\E\widehat\pi_L(\varphi)=\pi^\theta_L(\varphi)$, and the oscillation of $\varphi$ on $\mathsf{X}$ is at most
$2d^{1/2}\|\varphi\|_{\Lip}$, so $|\pi^\theta_L(\varphi)-\pi_L(\varphi)|\le2d^{1/2}\|\varphi\|_{\Lip}(\Delta_L/2)^{1/2}
=\|\varphi\|_{\Lip}(2d\Delta_L)^{1/2}$. The mean square error is the squared bias plus the variance.
\end{proof}
\begin{proof}[Proof of Corollary~\ref{cor:fittedmh}]
We condition on the fitted parameters, which are independent of the particles by \ref{ass:A4}, and write $\widetilde\pi^\theta_l=\pi^\theta_{l-1}K_{\theta_l}$
for the law of the transported particles, so that $\pi^\theta_l=\widetilde\pi^\theta_lM_l^{n_l}$.

\emph{Bias.} Since $\pi_l=\pi_{l-1}K^\star_l$,
\[
  \|\widetilde\pi^\theta_l-\pi_l\|_{\mathrm{TV}}\le\|\pi^\theta_{l-1}K_{\theta_l}-\pi_{l-1}K_{\theta_l}\|_{\mathrm{TV}}
  +\|\pi_{l-1}K_{\theta_l}-\pi_{l-1}K^\star_l\|_{\mathrm{TV}}\le T_{l-1}+(\mathcal E_l/2)^{1/2},
\]
because a Markov kernel does not increase the total variation distance, and the second term is at most
$\|\pi_{l-1}\otimes K_{\theta_l}-\xi^\star_l\|_{\mathrm{TV}}\le(\mathcal E_l/2)^{1/2}$ by Pinsker's inequality. Since $M_l$ leaves $\pi_l$
invariant and has Dobrushin coefficient $\tau_l$, $\|\mu M_l-\pi_l\|_{\mathrm{TV}}\le\tau_l\|\mu-\pi_l\|_{\mathrm{TV}}$ for every
$\mu$, which gives the recursion for $T_l$. Iterating it with $\tau_l^{n_l}\le\bar\tau$ and $\mathcal E_l\le\bar{\mathcal E}$ gives
$T_L\le(\bar{\mathcal E}/2)^{1/2}\sum_{k\ge1}\bar\tau^k$. As in the proof of Corollary~\ref{cor:fitted},
$|\E\widehat\pi_L(\varphi)-\pi_L(\varphi)|=|\pi^\theta_L(\varphi)-\pi_L(\varphi)|\le2d^{1/2}\|\varphi\|_{\Lip}T_L$.

\emph{Variance.} The lineages remain independent and identically distributed, so it suffices to bound $V_l$. With
$\widetilde X_l$ the transported particle and $X_l$ the particle after the Metropolis--Hastings steps,
$\|X_l-X_{l-1}\|_2^2\le2\|\widetilde X_l-X_{l-1}\|_2^2+2\|X_l-\widetilde X_l\|_2^2$. The pair $(X_{l-1},\widetilde X_l)$ has law
$\pi^\theta_{l-1}\otimes K_{\theta_l}$, whose total variation distance to $\xi^\star_l$ is at most
$T_{l-1}+\|\pi_{l-1}\otimes K_{\theta_l}-\xi^\star_l\|_{\mathrm{TV}}\le T_{l-1}+(\mathcal E_l/2)^{1/2}$, and $\|x-y\|_2^2\le4d$ on
$\mathsf{X}\times \mathsf{X}$, so
\[
  \E\|\widetilde X_l-X_{l-1}\|_2^2\le\kappa W_2^2(\pi_{l-1},\pi_l)+\omega_l+4d\big(T_{l-1}+(\mathcal E_l/2)^{1/2}\big).
\]
Hence $V_l\le\|\varphi\|_{\Lip}^2\E\|X_l-X_{l-1}\|_2^2\le\tilde s_l^{\,2}$.
\end{proof}

\section{Implementation Details}\label{app:impl}

\subsection{LightSB fits}\label{app:fits}
At each transition $l-1\to l$ the potential is
$v_\theta(y)=\sum_{j=1}^4\vartheta_j\,\phi_d(y;m_j,\Sigma_j)$ with diagonal $\Sigma_j$, fitted by minimizing the
self-normalized objective $\mathcal L^{N_{l-1}}(\theta)$ of Section~\ref{sec:approach} on the level-$(l-1)$ cloud with Adam,
minibatches of $\min(N_{l-1},1024)$, gradient-norm clipping at 10 and a cosine learning-rate schedule from
$10^{-2}\varepsilon_l/0.05$. The cloud is whitened by its own mean and covariance before fitting, and the fit is initialized at
the identity coupling: $\Sigma_j=\varepsilon_lS_lI_d$ and $m_j=(1-S_l)a_j$ for anchors $a_j$ drawn from the weighted cloud, so
that initially $Y\mid x,j\sim\Na(S_lx+(1-S_l)a_j,(1-S_l^2)I_d)$. Most of the fitting error
sits at the first few transitions, so the number of Adam iterations is set per transition: $(300,300,300,300,100,\dots)$
for the first problem and $(1000,1000,1000,300,100,\dots)$ for the second and third.

\subsection{PDE solves per method}\label{app:solves}
At transition $l-1\to l$ the methods solve the level-$l$ PDE at the following numbers of points (with $n_l$ the number
of MCMC sweeps):
\begin{center}\footnotesize
\begin{tabular}{lll}
\toprule
method & level-$l$ solves & purpose\\
\midrule
MLSMC-pCN (both variants) & $N_{l-1}+n_lN_l$ & weights $\gamma_l/\gamma_{l-1}$; mutation (none at $l=L$)\\
LightSB & $N_{l-1}+N_l$ & training weights $\gamma_l/\gamma_{l-1}$; transported particles\\
TransportMH & $N_{l-1}+N_l+n_lN_l$ & as LightSB; MH sweeps (also at $l=L$)\\
\bottomrule
\end{tabular}
\end{center}
The transported particles are new points, so $\gamma_l$ must be evaluated there, for the MH ratio and for the
training weights of the next transition. LightSB and TransportMH also spend time on the stochastic gradient fits, which involve no PDE
solve. In test case 1 the element averages of $k$ and the load integrals are exact; in test case 2 $k$ is evaluated at
triangle centroids and the systems are solved by sparse LU (dense Cholesky for $l\le1$); in test case 3 the spatial
derivatives are applied through FFTs with the modified wavenumbers $\sin(qh_l)/h_l$ and $\{2\sin(qh_l/2)/h_l\}^2$ for a wavenumber $q$, and a
level-4 solve ($128^2$ grid points, 128 steps) takes about 0.4\,s.

\subsection{Sample sizes and reference values}\label{app:settings}
\paragraph{Test case 1.} $\alpha=1$, $\beta=2$, $\zeta=1$, horizons $L=1,\dots,9$, $N_0=10^5$ at $L=9$:
\[
  L=9:\ N=(100000,\,35356,\,12500,\,4420,\,1563,\,553,\,196,\,100,\,100,\,100).
\]
Level-0 samples are i.i.d.\ from $\pi_0$ by rejection from the prior. The reference value of $\pi(\varphi)$ is
$50.33335\pm0.00010$, from self-normalized importance sampling with $2\times10^8$ samples on the continuum model, using a
Student-$t$/prior defensive mixture proposal with bounded weights.

\paragraph{Test case 2.} Within a run $N_l\propto h_l^{3.25}$ ($\beta=4$ from the $h_l^2$ convergence of the sensor values,
$\zeta=2.5$ from the solve time), floored at 100, and $N_0$ grows by a factor 4 per added level:
\begin{align*}
  L=5&:\ N=(20000,\,2103,\,221,\,100,\,100,\,100),\\
  L=6&:\ N=(80000,\,8409,\,884,\,100,\,100,\,100,\,100).
\end{align*}
Prior rejection is not feasible at level 0, so the level-0 samples come from an adaptive tempered SMC sampler on
$\pi_0$ (likelihood tempering with ESS 50\%, five pCN-MH sweeps per stage). There is no continuum reference; the target is
$\pi_5(\varphi)$, with reference value $0.99780\pm0.00112$ from 40 independent runs of method (ii) at ten times the
$L=5$ allocation. The level-6 correction is $(1\pm1)\times10^{-5}$, far below this standard error, so the $L=6$ runs
are scored against the same value.

\paragraph{Test case 3.} $N_l\propto h_l^{3.5}$ ($\beta=4$ from the $h_l^2$ convergence of the sensor values, $\zeta=3$
from the solve cost), floored at 100, with $N_0$ growing by a factor 4 per added level:
\[
  L=4:\ N=(20000,\,1768,\,156,\,100,\,100),
\]
and $(5000,442,100,100)$, $(1250,110,100)$, $(312,100)$ for $L=3,2,1$. The level-0 samples come from the tempered SMC
sampler of test case 2. The target is $\pi_3(\varphi)$, with reference value $0.7981\pm0.0029$ from 20 independent runs
of method (ii) with $N=(20000,1768,400,400)$, independent of the runs being scored. The level-4 correction is at most
$6\times10^{-4}$ in absolute value for every method, below this standard error, so the $L=4$ runs are scored against the
same value. The reference standard error is 0.36\% of $\bar\pi$, which is not negligible next to the smallest errors
in Table~\ref{tab:results}: it contributes about a sixth of the mean squared error of TransportMH at $L=4$, and it is included in the
standard errors of the relative bias.

\subsection{Breakdown of the computing time}\label{app:cost}
Table~\ref{tab:cost} splits the cost at the finest horizon. Three points explain the runtime differences in
Figure~\ref{fig:err-time}.
\begin{table}[!htbp]
\centering\footnotesize
\setlength{\tabcolsep}{3pt}
\begin{tabular}{l rrrr rrrr rrrr}
\toprule
& \multicolumn{4}{c}{1D elliptic, $L=9$} & \multicolumn{4}{c}{Darcy flow, $L=6$} & \multicolumn{4}{c}{Navier--Stokes, $L=4$}\\
\cmidrule(lr){2-5}\cmidrule(lr){6-9}\cmidrule(lr){10-13}
method & solves & work & PDE & fit & solves & work & PDE & fit & solves & work & PDE & fit\\
 & ($10^5$) & ($10^7$) & (s) & (s) & ($10^5$) & ($10^7$) & (s) & (s) & ($10^4$) & ($10^8$) & (s) & (s)\\
\midrule
MLSMC-pCN    & 4.2  & 1.2  & 2.9\,(1.2)  & --  & 2.0  & 5.4  & 35.9\,(10.6)  & --  & 4.8 & 5.2  & 49.1\,(10.5)  & --\\
+ ESS sweeps & 13.7 & 2.8  & 9.4\,(7.6)  & --  & 4.25 & 8.3  & 46.9\,(21.5)  & --  & 9.6 & 7.1  & 81.0\,(41.7)  & --\\
LightSB      & 3.1  & 0.84 & 2.2         & 2.8 & 1.8  & 4.2  & 29.0          & 3.0 & 4.4 & 4.4  & 42.8          & 2.9\\
TransportMH  & 14.3 & 3.1  & 9.7\,(8.0)  & 2.7 & 4.35 & 14.1 & 114.0\,(88.7) & 3.0 & 9.8 & 15.9 & 142.7\,(103.3) & 3.0\\
\bottomrule
\end{tabular}
\caption{Cost per run at the finest horizon (means over 30 runs): number of PDE solves; element work
$\sum_l(\text{level-}l\text{ solves})\times(\text{elements on level }l)$ (in test case 3, grid points times time steps);
wall time of the PDE solves, with in parentheses the part inside MCMC (for TransportMH this includes the solves at the
transported particles; LightSB has no MCMC); and wall time of the LightSB fits. The total wall times are those of Table~\ref{tab:results};
the remainder is resampling, the estimator and bookkeeping.}\label{tab:cost}
\end{table}

\begin{enumerate}[label=(\arabic*)]
\item \emph{The 30 sweeps at the first transition dominate the solve count} of both ESS-adaptive methods: they are run
  on $N_1=35\,356$ particles in test case 1 and $8\,409$ in test case 2, i.e., $1.06\times10^6$ of the $1.37\times10^6$
  solves of MLSMC-pCN with ESS sweeps in test case 1 and $2.5\times10^5$ of $4.25\times10^5$ in test case 2. Level-1 solves
  are cheap, but there are many: they take 7.2\,s in test case 1 and 12\,s in test case 2. Plain MLSMC-pCN does not pay
  this, which is why it is the fastest method at equal $L$; it pays in accuracy at level 2, whose correction standard
  deviation is $0.0061$ against $0.0011$ with the sweeps (test case 1).
\item \emph{TransportMH and MLSMC-pCN with ESS sweeps make almost the same number of solves} ($1.43$ against
  $1.37\times10^6$; $4.35$ against $4.25\times10^5$), but the extra solves of TransportMH, $N_l$ transported particles plus
  $3N_l$ MH solves at every level including $L$, sit at the finest and most expensive levels. In test case 1 every solve
  is cheap (a tridiagonal system), the element work differs by only 13\%, and most of the time difference is the
  LightSB fitting: 2.7\,s per run, a fifth of the total; it hardly depends on $N_l$, because the minibatches are capped at
  1024, so it weighs most when the solves are cheap. In test case 2 the fits cost 3.0\,s (2.5\%), but a level-6 solve takes about 0.16\,s and TransportMH makes
  400 of them (100 transported particles and 300 MH solves) where MLSMC, which does not mutate at the final level, makes
  none beyond the 100 for its weights. The transported particles and MH sweeps at levels 5 and 6 take 74\,s of the
  119\,s of TransportMH at $L=6$; the rest, mainly the $\gamma_l/\gamma_{l-1}$ solves (25\,s), the level-1 sweeps (12\,s) and
  the fits (3\,s), matches MLSMC-pCN with ESS sweeps. Test case 3 is similar: the two methods make $9.8$ and
  $9.6\times10^4$ solves, but TransportMH makes 500 level-4 solves per run (100 for the weights, 100 at the transported
  particles and 300 MH solves) against 100 for MLSMC, and at $2.1\times10^6$ grid-point steps per level-4 solve these 400
  extra solves are $8.4\times10^8$ of the $8.8\times10^8$ difference in work. The fits cost 3.0\,s, 2\% of the total.
\item \emph{The fine-level MH sweeps of TransportMH move little.} Where $e_l\approx1$, $\rho_l=S_l$ and
  $1-\rho_l^2=\varepsilon_lS_l\approx\varepsilon_l\le2\times10^{-4}$ from level 5 on, and the acceptance rate is 94--99\% (in test
  case 3, $\varepsilon_4=7.8\times10^{-4}$ and the acceptance rate at level 4 is about 96\%). Each sweep
  then adds about as much noise to $X_l^i-X_{l-1}^i$ as the transport itself, so three sweeps multiply the variance of
  the paired differences $\varphi(X_l^i)-\varphi(X_{l-1}^i)$ by about four: $2.3\times10^{-6}$ before and
  $9.4\times10^{-6}$ after MH at level 6 of test case 2, $2.5\times10^{-6}$ and $9.7\times10^{-6}$ at level 9 of test case
  1. At the coarse transitions the sweeps are what make TransportMH accurate (without them, LightSB has a relative bias of
  $-2\times10^{-3}$ and $-3\times10^{-2}$), but at fine levels they cost three expensive solves per particle and inflate
  the variance. Skipping them where $e_l\ge e_{\mathrm{high}}$ would, from the measured solve times, bring the $L=6$ run of
  test case 2 from 119\,s to roughly 60--65\,s; we have not tested what it does to the bias.
\end{enumerate}

\end{document}